\newcommand{\BL}[1]{\textcolor{black}{#1}}
\title{A new ParaDiag time-parallel time integration method}
\author{Martin J. Gander\thanks{Section de Math\`ematiques, University
    of Geneva, Switzerland ({\tt martin.gander@unige.ch})}
  \and Davide Palitta\thanks{Dipartimento di Matematica and AM$^2$, Alma Mater
    Studiorum - Universit\`a di Bologna, Piazza di Porta S. Donato, 5,
    I-40127 Bologna, Italy ({\tt davide.palitta@unibo.it})}}
\newcounter{mymac@matlab}
\newcommand{\MATLAB}{{\sc matlab}%
  \ifnum\value{mymac@matlab}<1%
  \textsuperscript{\textregistered}%
  \setcounter{mymac@matlab}{1}%
  \fi%
}
\begin{document}
\maketitle

\bibliographystyle{siam}

\begin{abstract}
Time-parallel time integration has received a lot of attention in the
high performance computing community over the past two decades. Indeed, it has been shown that parallel-in-time techniques have the potential to remedy one of the main computational drawbacks of parallel-in-space solvers. In particular, it is well-known that for large-scale evolution problems space parallelization
saturates long before all processing cores are effectively used on
today's large scale parallel computers. Among the many approaches for
time-parallel time integration, ParaDiag schemes have {\color{black}proven} to be
a very effective approach. In this framework, the time stepping matrix
or an approximation thereof is diagonalized by Fourier techniques, so that computations taking place at different time steps can be indeed carried out in parallel. We propose here a
new ParaDiag algorithm combining the Sherman-Morrison-Woodbury
formula and Krylov techniques. A panel of diverse numerical examples illustrates the potential of our new solver. In particular, we show that it performs very well
compared to different ParaDiag algorithms recently proposed in the literature.
\end{abstract}

\section{Introduction}

Time-parallel time integration is currently a very active field of
research within the high performance computing community. Research
interest {\color{black}was relaunched} over twenty years ago with the
introduction of the Parareal algorithm \cite{lions2001parareal}, which
is a two-level, non-intrusive method that allows existing codes to be
parallelized, and works well on parabolic problems; see,
e.g.,~\cite{gander2007analysis,gander2008nonlinear} for detailed
convergence analyses. Nowadays, many other different techniques for
time parallelization of evolution problems can be found on the market:
methods based on multiple shooting (like Parareal), methods based on
Domain Decomposition and Waveform relaxation, Multigrid type methods,
and even direct time-parallel solvers; see,
e.g.,~\cite{gander2015years} for a thorough review of such
schemes. The boundaries between these different solvers have become
less and less strict over the years, with tools designed for a
specific method being fully exploited in others.  The ParaDiag family
of time-parallel time integrators is a typical example of such
permeability. Originally, ParaDiag methods were designed as direct
time-parallel solvers \cite{MR2385067}. To overcome the
nondiagonalizability of the time stepping matrix which is a Jordan
block for a constant time step, it was first proposed to use different
time steps.  However, this trick can lead to very ill-conditioned
eigenvector matrices which can potentially pollute the entire solution
process, especially for fine time grids.  A thoughtful tradeoff
between having similar time steps for accuracy and different ones for
diagonalizability is thus crucial to make this first ParaDiag method
successful; see, e.g.,~\cite{Gander2016,Gander2019} for a detailed
analysis. Due to the {\color{black}practical} limitation this tradeoff
imposes on the number of time steps that can be parallelized, a recent
approach consists in considering an approximate problem where the time
stepping matrix is periodic\footnote{\BL{A anonymous reviewer pointed
    out that this corresponds to approximating a partial fraction
    expansion of the rational approximation for the matrix exponential
    defined by the numerical scheme (which is equivalent to a
    Weierstrass normal form \cite{frommer2021matrix}, here essentially
    the Jordan Canonical Form that was discovered independently
    \cite{hawkins1977weierstrass}), by imposing periodicity to obtain
    a normal form which is diagonal.}}. Thanks to this approximation,
the ParaDiag approach can be well combined with other time-parallel
methods as, e.g.,
Parareal~\cite{wu2018toward,gander2020diagonalization} or
MGRIT~\cite{wu2019acceleration}. See also \cite{gander2019convergence}
for time-periodic waveform relaxation for initial value problems and
\cite{gander2013analysis} for Parareal algorithms for truly
time-periodic problems.

A different approach consists in employing ParaDiag techniques within iterative methods. For instance, {\color{black} one approach approximates the time stepping matrix by a circulant matrix, with the latter being used to define} a
preconditioner for Krylov methods. A ParaDiag scheme can then be used for a more efficient application of the preconditioning operator; see, e.g., the iterative time
parallelization in~\cite{McDonald2018} for parabolic
problems, and~\cite{danieli2021all} for hyperbolic problems. Also for non-linear problems, ParaDiag algorithms
necessarily become iterative; see, e.g.,~ \cite{gander2017time}.

ParaDiag techniques have been developed for optimal control
problems~\cite{wu2020parallel} as well. In this setting, very good performance is obtained  by
using $\alpha$-circulant modifications of the time stepping matrix
\cite{liu2020fast,wu2021parallel}.  See also~\cite{gander2020paradiag} for an overview and implementation
details about these schemes.

In
contrast to the first attempts of time parallelization by
diagonalization using different time steps, these modern ParaDiag
methods are very successful in solving both parabolic and
hyperbolic evolution problems, and new ideas in this direction are
currently being developed; see, e.g.,~\cite{Kressner2022} where
interpolation and low-rank techniques are proposed and studied.

We present here a new ParaDiag algorithm for solving in a time-parallel fashion the evolution problem
\begin{equation}\label{eq:diff_prob}
 \begin{array}{rll}
         u_t&=&\mathfrak{L}(u)+f,\quad\text{in }\Omega\times(0,T],\\
         u&=&g,\quad\text{on }\partial\Omega,\\
         u(0)&=&u_0,\\
        \end{array}
\end{equation}
where the spatial domain $\Omega$ is such that $\Omega\subset\mathbb{R}^d$, $d=1,2,3$, and
$\mathfrak{L}$ is a linear differential operator involving only spatial
derivatives. If we discretize \eqref{eq:diff_prob} in space with a
finite element or finite difference method with $\bar n$ degrees of
freedom, and use a backward Euler scheme with $\ell$ time steps, the
\emph{all-at-once} discretization of~\eqref{eq:diff_prob} can be
written in matrix form as
\begin{equation}\label{eq:discrete_prob}
 (I+\tau K)U-U\Sigma_1^T=[\mathbf{u_0}+\tau\mathbf{f}_1,\ldots,\tau\mathbf{f}_\ell],
\end{equation}
where $K\in\mathbb{R}^{\bar n\times \bar n}$ is the stiffness matrix
stemming from the spatial discretization,
$\Sigma_1\in\mathbb{R}^{\ell\times\ell}$ is a zero matrix having ones
only on the first subdiagonal, the $j$-th column of
$U=[\mathbf{u}_1,\ldots,\mathbf{u}_\ell]\in\mathbb{R}^{\bar
  n\times \ell}$ represents the approximation to the solution $u$ at
time $t_j$, $j=1,\ldots,\ell$, $\tau=T/\ell$ is the time step, and
$\mathbf{u_0}$ and $\mathbf{f}_j$ gather the nodal values
of $u_0$ and $f(t_j)$ along with the boundary conditions. See~\cite{Pal2021}.

Recently, the matrix equation formulation~\eqref{eq:discrete_prob} has been used to design new solution procedures. In particular, low-rank solvers can be very successful in solving~\eqref{eq:discrete_prob} whenever the right-hand side $[\mathbf{u_0}+\tau\mathbf{f}_1,\ldots,\tau\mathbf{f}_\ell]$ has low rank; see, e.g.,
~\cite{Pal2021}.
On the other hand, the performance of such methods significantly worsens for right-hand sides with a sizable rank. In this paper, we {\color{black} address the performance issue when the right-hand side is possibly full rank.} In particular, we propose a new ParaDiag algorithm which is able to fully take advantage of the
circulant-plus-low-rank structure of $\Sigma_1$ so as to design an efficient
parallel-in-time algorithm for solving~\eqref{eq:discrete_prob}.
A significant advantage of this approach over low-rank space-time
schemes is {\color{black} that no} assumption on the
(numerical) rank of the right hand side
$[\mathbf{u_0}+\tau\mathbf{f}_1,\ldots,\tau\mathbf{f}_\ell]$ is needed in~\eqref{eq:discrete_prob}.

Here is an outline of the paper. In section~\ref{The novel algorithm} we show how to use the circulant-plus-low-rank structure of $\Sigma_1$ by combining~\eqref{eq:discrete_prob} with the matrix-oriented Sherman-Morrison-Woodbury formula. Such an approach sees as an intermediate step the solution of a linear system
whose coefficient matrix has a rather involved structure. An ad-hoc projection technique for the solution of this \emph{inner} linear system is proposed in section~\ref{GalerkinSection} where we also study some of the properties of the coefficient matrix in case of symmetric positive definite (SPD) stiffness matrices $K$. In section~\ref{alpha_acceleration} we propose a very successful variant of the ParaDiag scheme which makes use of $\alpha$-circulant matrices.  In section~\ref{Higher-order BDFs} we generalize our approach to the case of higher-order time discretization schemes. The potential of our new approach is illustrated in section~\ref{Numerical examples} where several numerical results are shown. In section~\ref{Conclusions} we draw our conclusions.

Throughout the paper we adopt the following notation. Capital letters $(A)$ denote matrices, bold, lower-case letters ($\mathbf{a}$) vectors, and plain, lower-case letters ($a$) scalars. $I_n=[\mathbf{e}_1,\ldots,\mathbf{e}_n]$ denotes the identity matrix of order $n$ and the subscript is omitted whenever the dimensions of $I$ are clear from the context; $\otimes$ is the Kronecker product whereas $\lambda_{\min}(A)$ and $\lambda_{\max}(A)$ denote the minimum and the maximum eigenvalue of $A$, respectively. Given a matrix $X=[\mathbf{x}_1,\ldots,\mathbf{x}_n]\in\mathbb{R}^{m\times n}$, $\text{vec}(X)\in\mathbb{R}^{mn}$ denotes the vector obtained by stacking the columns of $X$ on top of each other, namely $\text{vec}(X)=[\mathbf{x}_1^T,\ldots,\mathbf{x}_n^T]^T$.


\section{The new ParaDiag algorithm}\label{The novel algorithm}

Even though the discrete backward Euler operator $\Sigma_1$ cannot be diagonalized as it is a Jordan block, its {\color{black}circulant-plus-rank-one} structure can be exploited to design efficient solvers for~\eqref{eq:discrete_prob}. Indeed, we can write
\begin{equation}\label{eq:Sigma1_def}
 \Sigma_1=\begin{bmatrix}
            0 &  &  & \\
            1 & \ddots & & \\
            & \ddots & \ddots & \\
            & & 1 & 0 \\
           \end{bmatrix}=C_1-\mathbf{e}_1\mathbf{e}_\ell^T,\quad C_1=\begin{bmatrix}
            0 &  &  & 1\\
            1 & \ddots & & \\
            & \ddots & \ddots & \\
            & & 1 & 0 \\
           \end{bmatrix},
\end{equation}
where $C_1$ is a circulant matrix.

The relation above has been used in, e.g.,~\cite{McDonald2018} to derive preconditioning operators for the linear system counterpart of~\eqref{eq:discrete_prob}. In particular, such preconditioners were obtained by dropping the rank-1 term $\mathbf{e}_1\mathbf{e}_\ell^T$ in~\eqref{eq:Sigma1_def}.

In our setting, by inserting~\eqref{eq:Sigma1_def} into~\eqref{eq:discrete_prob}, we get
\begin{equation}\label{eq:discrete_prob_II}
  (I+\tau K)U-UC_1^T+U\mathbf{e}_\ell \mathbf{e}_1^T
    =[\mathbf{u_0}+\mathbf{f}_1,\ldots,\mathbf{f}_\ell].
\end{equation}
Since $C_1$ is a circulant matrix, it can be diagonalized by the Fast
Fourier Transform (FFT), namely
$\Pi_1={F}C_1{F}^{-1}=\text{diag}({F}C_1\mathbf{e}_1)=\text{diag}(\pi_1,\ldots,\pi_\ell)$,
where ${F}$ denotes the discrete Fourier matrix.

By postmultiplying~\eqref{eq:discrete_prob_II} by ${F}^T$ we get
\begin{equation}\label{eq:discrete_prob_III}
 (I_{\bar n}+\tau K)\widetilde U-\widetilde U\Pi_1+\widetilde U{F}^{-T}\mathbf{e}_\ell \mathbf{e}_1^T{F}^T=[\mathbf{u_0}+\tau\mathbf{f}_1,\ldots,\tau\mathbf{f}_\ell]{F}^T, \quad \widetilde U= U{F}^T,
\end{equation}
which can also be written in Kronecker form as
$$
\left(I_\ell\otimes(I_{\bar n}+\tau K)-\Pi_1\otimes I_{\bar n}+ {F}\mathbf{e}_1\mathbf{e}_\ell^T{F}^{-1}\otimes I_{\bar n}\right)\text{vec}(\widetilde U)=\text{vec}([\mathbf{u_0}+\tau\mathbf{f}_1,\ldots,\tau\mathbf{f}_\ell]{F}^T).
$$
If we look at the system matrix above, this can be viewed as the sum of two components. A main part
$P:=I_\ell\otimes(I_{\bar n}+\tau K)-\Pi_1\otimes I_{\bar n}$, and a
low-rank modification term $MN^T$ where $M:={F}\mathbf{e}_1\otimes I_{\bar n}$
and $N:={F}^{-T}\mathbf{e}_\ell\otimes I_{\bar n}$. The
Sherman-Morrison-Woodbury formula thus implies that
\begin{align}\label{eq:SMW_U}
 \text{vec}(\widetilde U)=&P^{-1}\text{vec}([\mathbf{u_0}+\tau\mathbf{f}_1,\ldots,\tau\mathbf{f}_\ell]{F}^T)\notag\\\
 &-P^{-1}M(I+N^TP^{-1}M)^{-1}N^TP^{-1}\text{vec}([\mathbf{u_0}+\tau\mathbf{f}_1,\ldots,\tau\mathbf{f}_\ell]{F}^T).
\end{align}
Since $P$ is block diagonal,
$$P=\begin{bmatrix}
     (1-\pi_1)I_{\bar n}+\tau K & & \\
     & \ddots & \\
     & & (1-\pi_\ell)I_{\bar n}+\tau K\\
    \end{bmatrix}\in\mathbb{R}^{\bar n\ell\times \bar n \ell},
$$
the action of its inverse can be efficiently computed in parallel. In particular, we can write
$$P^{-1}\text{vec}([\mathbf{u_0}+\tau\mathbf{f}_1,\ldots,\tau\mathbf{f}_\ell]{F}^T)=\text{vec}(L),$$
where
$$
\begin{aligned}
  &L:=[((1-\pi_1)I_{\bar n}+\tau K)^{-1}[\mathbf{u_0}+\tau\mathbf{f}_1,\ldots,\tau\mathbf{f}_\ell]{F}^T\mathbf{e}_1,\ldots,\\
    & \hspace*{10em}((1-\pi_\ell)I_{\bar n}+\tau K)^{-1}[\mathbf{u_0}+\tau\mathbf{f}_1,\ldots,\tau\mathbf{f}_\ell]{F}^T\mathbf{e}_\ell].
\end{aligned}
$$
By using the property of the Kronecker product,
 the last part in the second term on the right of \eqref{eq:SMW_U} is such that
$$N^TP^{-1}\text{vec}([\mathbf{u_0}+\tau\mathbf{f}_1,\ldots,\tau\mathbf{f}_\ell]{F}^T)= N^T\text{vec}(L)=L{F}^{-T}\mathbf{e}_\ell,$$
and the linear system with $I+N^TP^{-1}M$ we need to solve in~\eqref{eq:SMW_U} can thus be written as
\begin{equation}\label{eq:SMW_linearsystem}
  (I+N^TP^{-1}M)\mathbf{x}=\mathbf{b},\qquad \mathbf{b}:=L{F}^{-T}\mathbf{e}_\ell.
\end{equation}

A naive approach for solving~\eqref{eq:SMW_linearsystem} would be to first construct the coefficient matrix, and then apply one's favorite linear system solver. However,
we notice that we cannot explicitly compute the coefficient matrix
$I+N^TP^{-1}M$ as this {\color{black} requires applying $P^{-1}$ to all the $\bar
n$ columns of the matrix $M$, destroying potential parallel-in-time acceleration.}

We propose to use an ad-hoc projection scheme to solve~\eqref{eq:SMW_linearsystem}. To this end, we start by deriving an explicit expression for $N^TP^{-1}M$.  We
first notice that ${F}\mathbf{e}_1\in\mathbb{R}^\ell$ is the
vector of all ones, a property of the discrete Fourier transform. Therefore,
$M=[I_{\bar n},\ldots,I_{\bar n}]^T$. Moreover, if
$$
    {F}^{-T}\mathbf{e}_\ell=\begin{bmatrix}
                  \gamma_1\\
                  \vdots\\
                  \gamma_\ell\\
                 \end{bmatrix}\quad \text{ then }\quad
                  N=\begin{bmatrix}
                  \gamma_1I_{\bar n}\\
                  \vdots\\
                  \gamma_\ell I_{\bar n}\\
                 \end{bmatrix},              
$$
which implies that the term $N^TP^{-1}M$ we want to express explicitly
can be written as
\begin{align*}
N^TP^{-1}M=&\begin{bmatrix}
                  \gamma_1I_{\bar n}&
                  \cdots&
                  \gamma_\ell I_{\bar n}\\
                 \end{bmatrix}
\begin{bmatrix}
     ((1-\pi_1)I_{\bar n}+\tau K)^{-1} & & \\
     & \ddots & \\
     & & ((1-\pi_\ell)I_{\bar n}+\tau K)^{-1}\\
    \end{bmatrix}\begin{bmatrix}
                  I_{\bar n}\\
                  \vdots\\
                   I_{\bar n}\\
                 \end{bmatrix}\\
=&\sum_{i=1}^\ell \gamma_i
((1-\pi_i)I_{\bar n}+\tau K)^{-1}.
                 \end{align*}
Therefore, the linear system~\eqref{eq:SMW_linearsystem} can be
rewritten in the form
\begin{equation}\label{eq:SMW_linearsystemII}
  \underbrace{\left(I+\sum_{i=1}^\ell \gamma_i
  ((1-\pi_i)I_{\bar n}+\tau K)^{-1}\right)}_{=:J_\ell}\mathbf{x}=\mathbf{b}.
\end{equation}
We propose to apply a projection method for solving problem
\eqref{eq:SMW_linearsystemII} where the residual vector is imposed to be orthogonal to a suitable subspace. This is a very general machinery and
its effectiveness depends mainly on the approximation space one
uses. The details of our approach are given in
section~\ref{GalerkinSection}, and for the moment we simply denote by
$\mathbf{x}_m$ the computed approximation\footnote{The index $m$
  denotes the number of iterations performed by the projection method in
  Algorithm~\ref{alg:galerkin} to obtain the sought approximation; see
  section~\ref{GalerkinSection}.} to $\mathbf{x}$, solution
to~\eqref{eq:SMW_linearsystemII}.  Going back to~\eqref{eq:SMW_U}, we
use $\mathbf{x}_m$ to compute
$$
  W=P^{-1}M\mathbf{x}_m=P^{-1}\text{vec}(\mathbf{x}_m\mathbf{e}_1^T{F}^T)=P^{-1}\text{vec}(\mathbf{x}_m\mathbf{1}_\ell^T),
$$
where $\mathbf{1}_\ell\in\mathbb{R}^\ell$ denotes the vector of all
ones, and $P^{-1}$ can be applied column-wise in parallel once
again, since it is block diagonal. We thus get
$$
  W=[((1-\pi_1)I_{\bar n}+\tau K)^{-1}\mathbf{x}_m,\ldots,
  ((1-\pi_\ell)I_{\bar n}+\tau K)^{-1}\mathbf{x}_m].
$$
To conclude, the solution $U$ is then obtained by computing
$$
  U=(L-W){F}^{-T}.
$$
The pseudocode of our new ParaDiag method is given in Algorithm~\ref{alg:paradiag}.
\begin{algorithm}[t]
  \DontPrintSemicolon
  \SetKwInOut{Input}{input}\SetKwInOut{Output}{output}
  \Input{$K\in\mathbb{R}^{\bar n\times \bar n}$, $\mathbf{u}_0$, $\mathbf{f}_i\in\mathbb{R}^{\bar n}$, $i=1,\ldots,\ell$, $\ell\in\mathbb{N}$.}
  \Output{$U\in\mathbb{R}^{\bar n\times \ell}$ approximate solution to~\eqref{eq:discrete_prob}.}
  \BlankLine
  Compute $[\pi_1,\ldots,\pi_\ell]^T=
  {F}C_1\mathbf{e}_1$,  and $[\gamma_1,\ldots,\gamma_\ell]^T={F}^{-T}\mathbf{e}_\ell$\;

   \SetKwBlock{ParFor}
   {{\color{black} parfor} $i=1,\ldots,\ell$}{end}
  \ParFor{
    Set $L\mathbf{e}_i=((1-\pi_i)I_{\bar n}+\tau K)^{-1}[\mathbf{u}_0+\tau\mathbf{f}_1,\tau\mathbf{f}_2,\ldots,\tau\mathbf{f}_\ell]{F}^T\mathbf{e}_i$\;
    }
    
    Compute $\mathbf{b}=L{F}^{-T}\mathbf{e}_\ell$\;
    
    Compute $\mathbf{x}_m$ by applying Algorithm~\ref{alg:galerkin} to~\eqref{eq:SMW_linearsystemII}\;
    
  \SetKwBlock{ParFor}
   {{\color{black} parfor} $i=1,\ldots,\ell$}{end}
  \ParFor{
    Set $W\mathbf{e}_i=((1-\pi_i)I_{\bar n}+\tau K)^{-1}\mathbf{x}_m$\;
    }
    
    Set $U=(L-W){F}^{-T}$
    \caption{New ParaDiag Algorithm with Backward Euler}\label{alg:paradiag}
\end{algorithm}


\section{The projection method for the inner linear system}\label{GalerkinSection}

In this section we describe the solution
of~\eqref{eq:SMW_linearsystemII} by a suitable projection method.  In
particular, we compute a numerical approximation
$\mathbf{x}_m=V_m\mathbf{y}_m\approx \mathbf{x}$, where the
orthonormal columns of $V_m\in\mathbb{R}^{\bar n\times m}$ span a
suitable subspace $\mathcal{K}_m$, namely
$\mathcal{K}_m=\text{Range}(V_m)$, and the vector
$\mathbf{y}_m\in\mathbb{R}^m$ is computed by imposing an orthogonality
condition on the residual $\mathbf{r}_m=J_\ell
\mathbf{x}_m-\mathbf{b}$, i.e., $V_m^T\mathbf{r}_m=\mathbf{0}$.

Some computational aspects and the effectiveness of any projection
method strongly depend on the adopted approximation space
$\mathcal{K}_m$. {\color{black} Even though the structure of $J_\ell$ in~\eqref{eq:SMW_linearsystemII} is rather involved -- it is a linear combination of inverses of shifted and scaled $K$'s --
we propose using} the polynomial
Krylov subspace
\begin{equation}\label{eq:def_Krylov}
 \mathcal{K}_m={\color{black}\mathcal{K}_m(K,\mathbf{b})=}\text{span}\{\mathbf{b},K\mathbf{b},\ldots,K^{m-1}\mathbf{b}\},
\end{equation}
so that our projection method can be seen as a FOM-like scheme\footnote{Our projection scheme does not amount to the actual FOM method as the matrix used to define the subspace $\mathcal{K}_m$, namely $K$, does not coincide with the coefficient matrix of the linear system we want to solve, namely $J_\ell$.}.
However, other options {\color{black}such as} rational Krylov subspaces can be
used instead of~\eqref{eq:def_Krylov} as well. {\color{black} In our numerical experiments, using}~\eqref{eq:def_Krylov} already leads to remarkable performance, especially in terms of number of iterations;
see section~\ref{Numerical examples}.
  This means that using more sophisticated spaces, with a more expensive construction step, may not {\color{black}reap benefits.

  We would like to mention here that the FOM-like method we propose for solving~\eqref{eq:SMW_linearsystemII} can indeed be seen as a Krylov method for the numerical approximation of $f(K)\mathbf{b}$, with $f$ being the rational function $f(K)=\left(I+\sum_{i=1}^\ell \gamma_i
  ((1-\pi_i)I_{\bar n}+\tau K)^{-1}\right)^{-1}\mathbf{b}$. Further considerations about the connection between these two points of view will be made at the end of this section \BL{when $K$ is SPD}.
  }

The use of the approximation space~\eqref{eq:def_Krylov} allows us to compute the vector $\mathbf{y}_m$ and derive a relation for a cheap
computation of the residual norm $\|\mathbf{r}_m\|$, \BL{using
the Arnoldi relation arising from the construction of the Krylov subspace~{\color{black}\cite{Saad2003}},}
\begin{equation}\label{eq:polArnoldi}
  KV_m=V_mT_m+t_{m+1,m}\mathbf{v}_{m+1}\mathbf{e}_m^T,
\end{equation}
where $T_m=V_m^TKV_m$, $t_{m+1,m}$ stems from the orthogonalization
procedure, and $\mathbf{v}_{m+1}$ is the $(m+1)$-st basis vector. Since the relation \eqref{eq:polArnoldi} is
shift-invariant, we can shift $K$ using any eigenvalue
$\pi_i$ of the circulant matrix, $C_1$, giving
$$
((1-\pi_i)I_{\bar n}+\tau K)V_m=V_m((1-\pi_i)I_m+\tau T_m)+t_{m+1,m}\mathbf{v}_{m+1}\mathbf{e}_m^T,\quad\forall\;i=1,\ldots,\ell.
$$

Moreover, by premultiplying by $((1-\pi_i)I_{\bar n}+\tau K)^{-1}$,
postmultiplying by $((1-\pi_iI_m)+\tau T_m)^{-1}$, and moving some terms we get
\begin{align}\label{eq:shiftedArnoldi}
((1-\pi_i)I_{\bar n}+\tau K)^{-1}V_m=&V_m((1-\pi_i)I_m+\tau T_m)^{-1}\notag\\
&-t_{m+1,m}((1-\pi_i)I_{\bar n}+\tau K)^{-1}\mathbf{v}_{m+1}\mathbf{e}_m^T((1-\pi_i)I_m+\tau T_m)^{-1}.
\end{align}
Looking at the residual vector, if $\beta:=\|\mathbf{b}\|$,
$\mathbf{h}_i:=((1-\pi_i)I_{\bar n}+\tau K)^{-1}\mathbf{v}_{m+1}$, and
$S_i:=((1-\pi_i)I_{m}+\tau T_m)^{-1}$, we have
\begin{align*}
  \mathbf{r}_m=&J_\ell V_m\mathbf{y}_m-\mathbf{b}\\
=&V_m\left(\left(I_m+\sum_{i=1}^\ell \gamma_i
S_i-t_{m+1,m}V_m^T\sum_{i=1}^\ell \gamma_i\mathbf{h}_i\mathbf{e}_m^TS_i\right)\mathbf{y}_m-\beta \mathbf{e}_1\right)\\
&-t_{m+1,m}(I-V_mV_m^T)\sum_{i=1}^\ell \gamma_i\mathbf{h}_i\mathbf{e}_m^TS_i\mathbf{y}_m.\\
 \end{align*}
Imposing the Galerkin condition $V_m^T\mathbf{r}_m=0$ is thus equivalent to
computing $\mathbf{y}_m$ as the solution of the $m\times m$ linear system
\begin{equation}\label{eq:compute_ym}
  \left(I_m+\sum_{i=1}^\ell \gamma_i\left(I_m-t_{m+1,m}V_m^T
    \mathbf{h}_i\mathbf{e}_m^T\right)S_i\right)\mathbf{y}_m=\beta \mathbf{e}_1.
\end{equation}
Then the residual norm is such that
\begin{equation}\label{eq:compute_resnorm}
  \|\mathbf{r}_m\|=|t_{m+1,m}|\cdot \left\|(I-V_mV_m^T)
    \sum_{i=1}^\ell \gamma_i\mathbf{h}_i\mathbf{e}_m^TS_i\mathbf{y}_m\right\|.  
\end{equation}
Our FOM-like method for~\eqref{eq:SMW_linearsystemII} is summarized in
Algorithm~\ref{alg:galerkin}.

\begin{algorithm}[t]
\setcounter{AlgoLine}{0}

  \DontPrintSemicolon
  \SetKwInOut{Input}{input}\SetKwInOut{Output}{output}
  \Input{$K\in\mathbb{R}^{\bar n\times \bar n}$, $\mathbf{b}\in\mathbb{R}^{\bar n}$, $\pi_i$,$\gamma_i\in\mathbb{C}$, $i=1,\ldots,\ell$, $\text{{\rm maxit}}\in\mathbb{N}$, $\epsilon>0$, ${\color{black}q}\geq 1$.}
  \Output{$\mathbf{x}_{m}\in\mathbb{R}^{\bar n}$ approximate solution to~\eqref{eq:SMW_linearsystemII}.}
  \BlankLine
  Set $\beta=\|\mathbf{b}\|$, $\mathbf{v}_1=\mathbf{b}/\beta$, $V_1=\mathbf{v}_1$, $m=0$, $\|\mathbf{r}\|=1$\;
  
  \While{$\|\mathbf{r}\|>\epsilon\cdot\beta$ {\rm \textbf{and}} $m\leq \text{{\rm maxit}}$}{
    Set $m=m+1$\;

    Compute $\widetilde{\mathbf{v}}=K\mathbf{v}_m$\;
    
    \For{$k=1,\ldots,m$}{
    Compute $t_{k,m}=\mathbf{v}_k^T\widetilde{\mathbf{v}}$\;
    
    Set $\widetilde{\mathbf{v}}=\widetilde{\mathbf{v}}-t_{k,m}\mathbf{v}_k$\;
    }
    Set $t_{m+1,m}=\|\widetilde{\mathbf{v}}\|$, $\mathbf{v}_{m+1}=\widetilde{\mathbf{v}}/t_{m+1,m}$, and $V_{m+1}=[V_m,\mathbf{v}_{m+1}]$\;
    
   \If{$\mathtt{mod}(m,{\color{black}q})=0$}{
    \SetKwBlock{ParFor}
   {{\color{black} parfor} $i=1,\ldots,\ell$}{end}
  \ParFor{
      Set $\mathbf{h}_i=((1-\pi_i)I_{\bar n}+\tau K)^{-1}\mathbf{v}_{m+1}$ and
      $S_i=((1-\pi_i)I_m+\tau T_m)^{-1}$\;
    }
    Solve $(I_m+\sum_{i=1}^\ell\gamma_i(I_m -t_{m+1,j}V_m^T\mathbf{h}_i\mathbf{e}_j^T)S_i)\mathbf{y}_m=\beta \mathbf{e}_1$\;\label{alg:line_ym}

    Compute $\|\mathbf{r}\|=|t_{m+1,m}|\cdot\|(I-V_mV_m^T)\sum_{i=1}^\ell \mathbf{h}_i\mathbf{e}_m^TS_i\mathbf{y}_m\|$\;

    }

    }
    
    Set $\mathbf{x}_m=V_m\mathbf{y}_m$\;\label{alg:line_end}
  \caption{FOM-like method for~\eqref{eq:SMW_linearsystemII}\label{alg:galerkin}}
\end{algorithm}

The most computationally demanding step of this Krylov method is the
residual norm computation. In particular, the computation of the
vectors $\mathbf{h}_i$ requires the parallel solution of the linear systems
with $(1-\pi_i)I_{\bar n}+\tau K$ for all $i=1,\ldots,\ell$. In order
to reduce the computational cost, we may want to
solve~\eqref{eq:compute_ym} and compute~\eqref{eq:compute_resnorm}
only every ${\color{black}q}\geq 1$ iterations, {\color{black}namely the residual norm gets \emph{frozen} for ${\color{black}q}$ iterations.
In the worst case scenario, this procedure leads to computing a slightly larger subspace than what would have been necessary by checking~\eqref{eq:compute_resnorm}
at each iteration. On the other hand,}
the overall number of
parallel-in-time loops performed by Algorithm~\ref{alg:paradiag} becomes $m/{\color{black}q}+2$.

{\color{black}Another option could be using an \emph{inner} Krylov method to compute  the $ \mathbf{h}_i$'s. Since $((1-\pi_i)I_{\bar n}+\tau K)\mathbf{h}_i=\mathbf{v}_{m+1}$, by exploiting the fact the the right-hand side $\mathbf{v}_{m+1}$ does not depend on the shift index $i$ along with the shift-invariance property of the Krylov subspace, one may want to construct $\mathcal{K}_t(K,\mathbf{v}_{m+1})$ and employ well-established Krylov routines for shifted linear systems; see, e.g.~\cite{Frommer1998,Simoncini2003}. On the other hand, this procedure would compute only
approximations $ \mathbf{\widetilde h}_i\approx \mathbf{h}_i$ in general. The impact of such approximations on the vector $\mathbf{y}_m$ and, ultimately, on $\mathbf{x}_m$ may be tricky to assess. Moreover,
   an underlying assumption of this paper is that we are able to solve $\ell$ shifted linear systems with $K$ by using parallelization.

}
 

We now consider in more detail the special case where the stiffness
matrix $K$ is SPD, and we show that
our algorithm for~\eqref{eq:SMW_linearsystemII} can largely take
advantage of this structure. {\color{black} See the Appendix for the proof of the following result.}
\begin{theorem}\label{th:alg_galerkin}
  Let the spatial stiffness matrix $K$ be symmetric positive
  definite. Then the coefficient matrix $J_\ell:=I+\sum_{i=1}^\ell
  \gamma_i ((1-\pi_i)I_{\bar n}+\tau K)^{-1}$
  in~\eqref{eq:SMW_linearsystemII} is Hermitian positive definite for
  any $\ell\geq 1$.  Moreover,
  \begin{equation}\label{eq:ThAlg} 
    \kappa(J_\ell)\leq 1+\frac{1}{\tau\lambda_{\min}(K)}.
  \end{equation}
\end{theorem}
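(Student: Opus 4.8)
The plan is to reduce everything to a scalar rational function of a single real variable by diagonalising $K$. Since $K$ is SPD, write $K = Q\Lambda Q^T$ with $Q$ real orthogonal and $\Lambda = \mathrm{diag}(\lambda_1,\dots,\lambda_{\bar n})$, $\lambda_j>0$. Each term $((1-\pi_i)I_{\bar n}+\tau K)^{-1}$ commutes with $K$, so $J_\ell = Q\,\phi(\Lambda)\,Q^T$ with the scalar function
\[
  \phi(x) \;=\; 1 + \sum_{i=1}^{\ell} \frac{\gamma_i}{1-\pi_i+\tau x}.
\]
Hence the eigenvalues of $J_\ell$ are precisely $\phi(\lambda_1),\dots,\phi(\lambda_{\bar n})$, and both assertions follow once the behaviour of $\phi$ on $(0,\infty)$ is understood.

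The central step — and the one I expect to need the most care — is to find a closed form for $\phi$. I would write $\phi(x)-1 = \gamma^T\bigl((1+\tau x)I_\ell-\Pi_1\bigr)^{-1}\mathbf{1}$, where $\mathbf{1} = F\mathbf{e}_1$ is the all-ones vector and $\gamma = F^{-T}\mathbf{e}_\ell$, and apply the matrix determinant lemma:
\[
  \phi(x) \;=\; \frac{\det\!\bigl((1+\tau x)I_\ell-\Pi_1+\mathbf{1}\gamma^T\bigr)}{\det\!\bigl((1+\tau x)I_\ell-\Pi_1\bigr)}.
\]
Using $\Sigma_1 = C_1 - \mathbf{e}_1\mathbf{e}_\ell^T$ and $\Pi_1 = FC_1F^{-1}$ one checks that $(1+\tau x)I_\ell-\Pi_1+\mathbf{1}\gamma^T = F\bigl((1+\tau x)I_\ell-\Sigma_1\bigr)F^{-1}$, so the numerator equals $\det\bigl((1+\tau x)I_\ell-\Sigma_1\bigr) = (1+\tau x)^\ell$ because $\Sigma_1$ is strictly lower triangular; and since the $\pi_i$ are the $\ell$-th roots of unity, the denominator equals $\prod_{i=1}^\ell(1+\tau x-\pi_i) = (1+\tau x)^\ell-1$. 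Therefore
\[
  \phi(x) \;=\; \frac{(1+\tau x)^{\ell}}{(1+\tau x)^{\ell}-1} \;=\; \frac{1}{1-(1+\tau x)^{-\ell}}.
\]
In particular $\phi$ is real-valued, which already shows that the individually complex matrices $\gamma_i((1-\pi_i)I_{\bar n}+\tau K)^{-1}$ add up to a real symmetric one.

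From here the conclusions are routine. For $x>0$ we have $(1+\tau x)^\ell>1$, hence $\phi(x)>1>0$; since $Q$ is real orthogonal, $J_\ell = Q\phi(\Lambda)Q^T$ is real symmetric with strictly positive eigenvalues, hence Hermitian positive definite for every $\ell\ge1$. For \eqref{eq:ThAlg}, observe that $x\mapsto(1+\tau x)^{-\ell}$ is strictly decreasing on $(0,\infty)$, so $\phi$ is strictly decreasing there; consequently $\lambda_{\max}(J_\ell)=\phi(\lambda_{\min}(K))$ and $\lambda_{\min}(J_\ell)=\phi(\lambda_{\max}(K))\ge 1$, and
\[
  \kappa(J_\ell) \;=\; \frac{\phi(\lambda_{\min}(K))}{\phi(\lambda_{\max}(K))} \;\le\; \phi(\lambda_{\min}(K)) \;=\; 1+\frac{1}{(1+\tau\lambda_{\min}(K))^{\ell}-1} \;\le\; 1+\frac{1}{\tau\lambda_{\min}(K)},
\]
the last inequality being Bernoulli's inequality $(1+y)^\ell\ge 1+\ell y\ge 1+y$ with $y=\tau\lambda_{\min}(K)>0$ and $\ell\ge1$. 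Thus the only nontrivial ingredient is the determinant identity of the second paragraph; everything else is bookkeeping.
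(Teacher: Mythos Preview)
Your proof is correct and takes a genuinely different route from the paper's. The paper argues Hermiticity by pairing complex-conjugate terms $(\pi_i,\gamma_i)$, then establishes positive definiteness through a fairly delicate case analysis: it splits the indices according to the sign of $\mathrm{Re}(\pi_i)$, exploits $\sum_i\mathrm{Re}(\pi_i)=0$, and compares weights $w(\pi_i)=1/(\ell(1-\mathrm{Re}(\pi_i)+\tau\lambda_{\max}(K)))$ against a pivot value, treating even and odd $\ell$ separately. The condition-number bound is likewise obtained by a chain of coarse inequalities on $\max|z^*J_\ell z|/\min|z^*J_\ell z|$.

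Your approach sidesteps all of this by computing the scalar symbol in closed form via the matrix determinant lemma, obtaining $\phi(x)=(1+\tau x)^\ell/\bigl((1+\tau x)^\ell-1\bigr)$ and hence the exact eigenvalues of $J_\ell$. This is substantially shorter and delivers more: it explains transparently why the complex summands combine to a real matrix, it yields the sharper intermediate bound $\kappa(J_\ell)\le 1+1/\bigl((1+\tau\lambda_{\min}(K))^\ell-1\bigr)$, and it makes the monotonicity in $\ell$ and $\tau$ explicit. The paper's argument, by contrast, uses only elementary estimates on the $\pi_i$'s and $\gamma_i$'s and never invokes the identity linking $\Sigma_1$, $C_1$, and the rank-one correction inside a determinant; the price is considerably more bookkeeping.
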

Recalling that Algorithm~\ref{alg:galerkin} imposes a Galerkin
condition on the Krylov subspace $\mathcal{K}_m$, we point out that
Theorem~\ref{th:alg_galerkin} implies that the solution $\mathbf{x}_m$
provided by Algorithm~\ref{alg:galerkin} minimizes the error in the
$J_\ell$-energy norm over $\mathcal{K}_m$ whenever $K$ is SPD.

It is well-known that the convergence rate of FOM-like methods for
symmetric positive definite problems is related to the condition
number of the coefficient matrix\footnote{Though better insight is provided by the entire
  eigenvalue distribution.}, see, e.g., \cite{LieStr13}. The
bound~\eqref{eq:ThAlg} displays the interplay between the spatial and
time discretization and how they contribute to the convergence of
Algorithm~\ref{alg:galerkin}. In particular, if $\lambda_{\min}(K)$ is
far from zero and the time grid is rather coarse, i.e. $\tau$ is
large, we expect Algorithm~\ref{alg:galerkin} to converge fast. On the
other hand, we may need many iterations to reach the desired level of
accuracy for problems posed on very fine time grids with a small
$\lambda_{\min}(K)$, depending on the scaling of $K$, e.g., $1/h^2$ or $1/h$.

Since $K$ is SPD, $\lambda_{\min}(K)$ can be cheaply computed by,
e.g., the inverse power method\footnote{The implementation of such a method must make use of the properties of $K$ to be efficient. In particular, its symmetric positive definite nature and the possible sparsity coming from the adopted discretization in space must be taken into account.}. Therefore, it is easy to check the
magnitude of $\tau\lambda_{\min}(K)$. On the other hand, one may want
to select a time grid such that the latter value is big enough to
guarantee a well-conditioned $J_\ell$.

For $K$ SPD, the solution of~\eqref{eq:compute_ym} by
Algorithm~\ref{alg:galerkin} can be seen as the numerical evaluation
of the action of the rational matrix function $f(K)=(I+\sum_{i=1}^\ell
\gamma_i ((1-\pi_i)I_{\bar n}+\tau K)^{-1})^{-1}$ on the vector $\mathbf{b}$,
namely $f(K)\mathbf{b}$, by the Lanczos method. Therefore, one may want to
take advantage of this viewpoint to derive a-priori upper bounds on the 2-
and/or $J_\ell$-norm of the error; see, e.g.,
\cite{FrommerSimoncini2008,FrommerEtal2013}. With these bounds at
hand, it would be possible to predict how many iterations are indeed
sufficient to get the desired level of accuracy in
Algorithm~\ref{alg:galerkin}, thus avoiding the expensive residual
norm computation in~\eqref{eq:compute_resnorm}. In this scenario, a
single parallel-in-time loop is necessary within
Algorithm~\ref{alg:galerkin}. In particular, once a sufficiently
large Krylov subspace is constructed, this parallel-in-time loop is
involved in the definition of the coefficient matrix
in~\eqref{eq:compute_ym} and thus in the computation of the vector
$\mathbf{y}_m$.

\section{$\alpha$-acceleration}\label{alpha_acceleration}

In the context of preconditioning operators for all-at-once linear
systems stemming from~\eqref{eq:diff_prob}, $\alpha$-circulant
matrices have been largely used; see,
e.g.,~\cite{liu2020fast}. We now explore the impact of this
technique also on the scheme we proposed in the previous sections. In
particular, given $\alpha\in(0,1]$, we write
\begin{equation}\label{eq:alpha_circulant}
\Sigma_1=C_\alpha-\alpha \mathbf{e}_1\mathbf{e}_\ell^T, \quad C_\alpha=\begin{bmatrix}
            0 &  &  & \alpha\\
            1 & \ddots & & \\
            & \ddots & \ddots & \\
            & & 1 & 0 \\
           \end{bmatrix}\in\mathbb{R}^{\ell\times\ell}.
\end{equation}
Now $C_\alpha$ is an $\alpha$-circulant matrix, and it can be diagonalized
by the \emph{scaled} Fast Fourier Transform
$$C_\alpha=D_\alpha^{-1}{F}^{-1}\Pi_\alpha {F}D_\alpha,\qquad D_\alpha=\begin{bmatrix}
     1 & & & \\
     & \alpha^{1/\ell} & & \\
     & & \ddots & \\
     & & & \alpha^{(\ell-1)/\ell}\\                                                                                      \end{bmatrix},\quad \Pi_\alpha=\alpha^{1/\ell}\Pi_1,
$$
see, e.g.,~\cite{Noschese2012}.

It is well-known that the
eigenvector matrix ${F}D_\alpha$ can be very ill-conditioned
for small $\alpha$ and sizable values of $\ell$. Indeed,
$\kappa({F}D_\alpha)=\alpha^{-(\ell-1)/\ell}$. Such ill-conditioning
will be one of the major obstacles in using very small values of
$\alpha$.

The use of~\eqref{eq:alpha_circulant} does not lead to any particular difficulty in the solver we proposed in section~\ref{The novel algorithm}. By following the same exact steps as in section~\ref{The novel algorithm} and adopting the same notation, a direct computation shows that~\eqref{eq:SMW_U} translates into
\begin{align}\label{eq:SMW_U_alpha}
 \text{vec}(\widetilde U)=&P^{-1}\text{vec}([\mathbf{u_0}+\tau\mathbf{f}_1,\ldots,\tau\mathbf{f}_\ell]D_\alpha{F}^T)\notag\\\
 &-\alpha^{1/\ell} P^{-1}M(I+\alpha^{1/\ell} N^TP^{-1}M)^{-1}N^TP^{-1}\text{vec}([\mathbf{u_0}+\tau\mathbf{f}_1,\ldots,\tau\mathbf{f}_\ell]D_\alpha{F}^T).
\end{align}
Once $\widetilde U$ is computed, the original solution can be retrieved by computing $U=\widetilde U{F}^{-T}D_\alpha^{-1}$.

From~\eqref{eq:SMW_U_alpha}, it is clear how the use of $\alpha$-circulant matrices is equivalent to introducing a \emph{weight} in our setting. Indeed,
the scalar $\alpha\in(0,1]$ determines the contribution of the correction term
\begin{align*}
  \text{vec}(U_2)=&\alpha^{1/\ell}(D_\alpha^{-1}{F}^{-1}\otimes I) P^{-1}M(I+\alpha^{1/\ell} N^TP^{-1}M)^{-1}N^TP^{-1}\\
  &\text{vec}([\mathbf{u_0}+\tau\mathbf{f}_1,\ldots,\tau\mathbf{f}_\ell]D_\alpha{F}^T),
\end{align*}
to the point that, for sufficiently small $\alpha$, the solution can
be often well approximated even when neglecting this term; see
section~\ref{Numerical examples}.

The inner FOM-like method can benefit from the introduction of the
parameter $\alpha$ as well. Indeed, it holds that
\begin{equation}\label{eq:innersystem_alpha}
I+\alpha^{1/\ell} N^TP^{-1}M= I+\alpha^{1/\ell}\sum_{i=1}^\ell \gamma_i
((1-\pi_i)I_{\bar n}+\tau K)^{-1}.
\end{equation}
Therefore, we need to solve a linear system whose coefficient matrix
can be seen as a small perturbation of the identity for $\alpha\ll
1$. Using small values of $\alpha$ has thus the potential of
remarkably reducing the number of iterations performed by the
FOM-like method to attain a prescribed accuracy. This motivates the
name \emph{$\alpha$-acceleration} whenever $\alpha$-circulant
matrices of the form~\eqref{eq:alpha_circulant} are used in our
context.

For the sake of completeness we report in
Algorithm~\ref{alg:alpha_paradiag} the overall procedure when using
the $\alpha$-circulant matrix in~\eqref{eq:alpha_circulant}.  In
line~\ref{alg:ifline} we compute the residual norm provided by the
first term of the solution, namely
$\text{vec}(U_1)=(D_\alpha^{-1}{F}^{-1}\otimes I)
P^{-1}\text{vec}([\mathbf{u_0}+\tau\mathbf{f}_1,\ldots,\tau\mathbf{f}_\ell]D_\alpha{F}^T)$. If
this residual norm is sufficiently small, we stop the algorithm and
set $U=U_1$ thus saving a lot of computational effort.

\begin{algorithm}[t]
\setcounter{AlgoLine}{0}

  \DontPrintSemicolon
  \SetKwInOut{Input}{input}\SetKwInOut{Output}{output}
  \Input{$K\in\mathbb{R}^{\bar n\times \bar n}$, $\mathbf{u}_0$, $\mathbf{f}_i\in\mathbb{R}^{\bar n}$, $i=1,\ldots,\ell$, $\ell\in\mathbb{N}$, $\alpha\in(0,1]$, $\tau,\epsilon>0$.}
  \Output{$U\in\mathbb{R}^{\bar n\times \ell}$ approximate solution to~\eqref{eq:discrete_prob}.}
  \BlankLine
  Compute $[\pi_1,\ldots,\pi_\ell]^T=
  \alpha^{1/\ell}{F}C_1\mathbf{e}_1$, $[\gamma_1,\ldots,\gamma_\ell]^T={F}^{-T}\mathbf{e}_\ell$, $D_\alpha=\text{diag}(1,\alpha^{1/\ell},\ldots, \alpha^{(\ell-1)/\ell})$\;

  \SetKwBlock{ParFor}
   {{\color{black} parfor} $i=1,\ldots,\ell$}{end}
  \ParFor{
    Set $L\mathbf{e}_i=((1-\pi_i)I_{\bar n}+\tau K)^{-1}[\mathbf{u}_0+\tau\mathbf{f}_1,\tau\mathbf{f}_2,\ldots,\tau\mathbf{f}_\ell]D_\alpha{F}^T\mathbf{e}_i$\;
    }

    Set $U_1=L{F}^{-T}D_\alpha^{-1}$\;

    \If{$\|(I+\tau K)U_1-U_1\Sigma_1^T-[\mathbf{u_0}+\tau\mathbf{f}_1,\ldots,\tau\mathbf{f}_\ell]\|_F\leq \epsilon\cdot\|U_1\|_F$\label{alg:ifline}}{

    Set $U=U_1$ and \textbf{return}\;
    }
    Compute $\mathbf{b}=L{F}^{-T}\mathbf{e}_\ell$\;

    Compute $\mathbf{x}_m$ by applying a variant of
    Algorithm~\ref{alg:galerkin} to
    $$
      \left(I+\alpha^{1/\ell}\sum_{i=1}^\ell \gamma_i
      ((1-\pi_i)I_{\bar n}+\tau K)^{-1}\right)\mathbf{x}=\mathbf{b}
    $$\label{alg:linegalerkin}%

  \SetKwBlock{ParFor}
   {{\color{black} parfor} $i=1,\ldots,\ell$}{end}
  \ParFor{Set $W\mathbf{e}_i=((1-\pi_i)I_{\bar n}+\tau K)^{-1}\mathbf{x}_m$\;
    }

    Set $U=U_1-\alpha^{1/\ell}W{F}^{-T}D_\alpha^{-1}$
    \caption{ParaDiag and Backward Euler with $\alpha$-acceleration}\label{alg:alpha_paradiag}
\end{algorithm}

Even though the use of tiny values of $\alpha$ would be desirable,
this would lead to an extremely ill-conditioned eigenvector matrix
${F}D_\alpha$. For very small values of $\alpha$, {\color{black}this poor} conditioning
pollutes the computed solution $U$. In particular, it can be
observed that the error is proportional to
$\kappa({F}D_\alpha)$. This drawback is well-known also in
the case of the variable time-stepping procedure proposed
in~\cite{Gander2016,Gander2019}. Finding the value of $\alpha$
providing the best trade-off between acceleration and loss of accuracy
is not an easy task.  We refer to our numerical experiments to supply
some insight to this important issue.

\section{Higher-order BDFs}\label{Higher-order BDFs}

In the previous sections we assumed that the backward Euler method was used for time discretization so that $\Sigma_1$ was of the form~\eqref{eq:Sigma1_def}.
In this section we show how to generalize our methodology {\color{black} to higher-order \BL{BDF} approximations of time-dependent problems~\eqref{eq:diff_prob}.}

The backward Euler method belongs to the larger class of backward
differentiation formulas (BDFs) for time-dependent
problems~\eqref{eq:diff_prob}. In particular, the backward Euler
method is a BDF of order one, and BDFs of order $s>1$ are
well-established time integrator schemes as well.

The matrix formulation
of the discrete problem stemming from {\color{black} an all-at-once discretization of~\eqref{eq:diff_prob} using} a BDF of order $s$ can be written as
\begin{equation}
 \label{eq:discrete_probBDF}
 (I+\tau\beta K)U-U\Sigma_s^T=G,
\end{equation}
where
$$G=\left[\sum_{j=1}^s\alpha_j\mathbf{u}_{1-j}+\tau\beta\mathbf{f}_1,\sum_{j=2}^s\alpha_j\mathbf{u}_{2-j}+\tau\beta\mathbf{f}_2\ldots,\alpha_s\mathbf{u}_{0}+\tau\beta\mathbf{f}_s,\tau\beta\mathbf{f}_{s+1},\ldots\tau\beta\mathbf{f}_\ell\right],$$
and
\begin{equation}
 \Sigma_s=\begin{bmatrix}
     0 & & & &&& \\
     \alpha_1 & \ddots & & &&&\\
     \vdots & \ddots& & &&&\\
     \alpha_s &  & \ddots& \ddots&&&\\
     0 & \ddots & &\ddots&&&\\
     \vdots & &\ddots &&\ddots&\ddots&\\
     0 & &\cdots&\alpha_s&\cdots& \alpha_1 & 0\\
    \end{bmatrix},
\end{equation}
see, e.g., \cite{Pal2021}.

In the equation above, $\mathbf{u}_{1-s},\ldots, \mathbf{u}_{0}$
denote the $s$ initial values required by the adopted $s$-order
BDF. In Table~\ref{tabBDF} we report the scalars
$\alpha_j=\alpha_j(s)$ and $\beta=\beta(s)$ defining a BDF of order
$s$; see, e.g.,~\cite[Table 5.3]{Ascher1998}\footnote{Notice that in
  Table~\ref{tabBDF} we have changed the sign of the $\alpha_j$'s with
  respect to the values listed in~\cite[Table 5.3]{Ascher1998} in order to have a term of the form $-U\Sigma_s^T$ also in~\eqref{eq:discrete_probBDF} as it is done for equation~\eqref{eq:discrete_prob}.}.
\begin{table}[!t]
 \centering
 \caption{
 BDF coefficients for $s\leq 6$.\label{tabBDF}}
\begin{tabular}{r r r r r rrr }
 $s$ & $\beta$  & $\alpha_1$ & $\alpha_2$ & $\alpha_3$ & $\alpha_4$ & $\alpha_5$ & $\alpha_6$\\
\hline
 1 & 1 & 1 & & & \\
 2 & 2/3 & 4/3 & -1/3 & & \\
 3 & 6/11 & 18/11 & -9/11 & 2/11 & \\
 4 & 12/25 & 48/25 &-36/25 & 16/25 & -3/25 & \\
 5 & 60/137 & 300/137 & -300/137 & 200/137 & -75/137 & 12/137 \\
 6 &60/147 & 360/147 &-450/147 & 400/147 &-225/147 & 72/147 &-10/147\\
  \end{tabular}
 \end{table}
It has been shown in, e.g.,~\cite[Section 5.2.3]{Ascher1998} that any BDF of order $s>6$ is unstable. We thus restrict ourselves to $s\leq 6$.

For a general BDF of order $s$, the time discrete operator $\Sigma_s$ can be written as a circulant plus a  matrix of rank $s$, namely
$$\Sigma_s=C_s-[\mathbf{e}_1,\ldots,\mathbf{e}_s]\boldsymbol{\alpha}_s[\mathbf{e}_{\ell-s+1},\ldots,\mathbf{e}_\ell]^T,\ \boldsymbol{\alpha}_s=\begin{bmatrix}
 \alpha_s&\ldots & &\ldots & \alpha_1\\
& \alpha_s&\ldots & \ldots & \alpha_1\\
& & \ddots& & \vdots\\
& & & \alpha_s& \alpha_{s-1}\\
& & & &\alpha_s\\
\end{bmatrix}\in\mathbb{R}^{s\times s}.
$$
This means that the same strategy presented in section~\ref{The
  novel algorithm} can be applied by setting
in~\eqref{eq:SMW_U} $P:=I_\ell\otimes (I_{\bar n}+\tau\beta
K)-\Pi_s\otimes I_{\bar n}$, $C_s:={F}\Pi_s{F}^{-1}$, $\Pi_s=\text{diag}(FC_s\mathbf{e}_1)$,
$M:={F}[\mathbf{e}_1,\ldots,\mathbf{e}_s]\otimes I_{\bar n}$, and
$N:={F}^{-T}[\mathbf{e}_{\ell-s+1},\ldots,\mathbf{e}_\ell]\boldsymbol{\alpha}_s^T$. The
main difference lies in the linear
system~\eqref{eq:SMW_linearsystem}. In this setting, the right-hand
side, simply denoted by $\mathbf{b}$ in~\eqref{eq:SMW_linearsystem}, is given
by $\mathbf{b}^{(s)}=\text{vec}(B^{(s)})$ where
{\small
$$B^{(s)}=[((1-\pi_1)I_{\bar n}+\tau\beta K)^{-1}G{F}^T\mathbf{e}_1,\ldots,((1-\pi_\ell)I_{\bar n}+\tau\beta K)^{-1}G{F}^T\mathbf{e}_\ell]{F}^{-T}[\mathbf{e}_{\ell-s+1},\ldots,\mathbf{e}_\ell]\boldsymbol{\alpha}_s^T.
$$}
Moreover, the coefficient matrix $I+N^TP^{-1}M\in\mathbb{R}^{s\bar
  n\times s\bar n}$ can be seen as an $s\times s$ block matrix with
$\bar n\times \bar n$ blocks; see, e.g., \cite[Section 5.2]{Pal2021}.
If
$${F}\mathbf{e}_i=\begin{bmatrix}
                  \theta_1^{(i)}\\
                  \vdots\\
                  \theta_\ell^{(i)}\\
                 \end{bmatrix},%
                 \quad 
{F}^{-T}[\mathbf{e}_{\ell-s+1},\ldots,\mathbf{e}_\ell]\boldsymbol{\alpha}_s\mathbf{e}_i=
\begin{bmatrix}
                  \gamma_1^{(i)}\\
                  \vdots\\
                  \gamma_\ell^{(i)}\\
                 \end{bmatrix},\quad 
                              i=1,\ldots,s,
   $$
  so that 
$$M=\begin{bmatrix}
      \theta_1^{(1)}I_{\bar n}&\cdots& \theta_1^{(s)}I_{\bar n}\\
                  \vdots&& \vdots\\
                  \theta_\ell^{(1)}I_{\bar n}&\cdots&\theta_\ell^{(s)}I_{\bar n}\\
    \end{bmatrix},\quad N=\begin{bmatrix}
      \gamma_1^{(1)}I_{\bar n}&\cdots& \gamma_1^{(s)}I_{\bar n}\\
                  \vdots&& \vdots\\
                  \gamma_\ell^{(1)}I_{\bar n}&\cdots&\gamma_\ell^{(s)}I_{\bar n}\\
    \end{bmatrix},
$$
then we have
{\small
$$
 N^TP^{-1}M =  \begin{bmatrix}
   \displaystyle\sum_{i=1}^\ell \gamma_i^{(1)}\theta_i^{(1)}
((1-\pi_i)I_{\bar n}+\tau\beta K)^{-1}

&\cdots& \displaystyle\sum_{i=1}^\ell \gamma_i^{(1)}\theta_i^{(s)}
((1-\pi_i)I_{\bar n}+\tau\beta K)^{-1}
\\
                  \vdots&& \vdots\\
   \displaystyle         \sum_{i=1}^\ell \gamma_i^{(s)}\theta_i^{(1)}
((1-\pi_i)I_{\bar n}+\tau\beta K)^{-1}
&\cdots&\displaystyle\sum_{i=1}^\ell \gamma_i^{(s)}\theta_i^{(s)}
((1-\pi_i)I_{\bar n}+\tau\beta K)^{-1}
\\
    \end{bmatrix}.
    $$}
Denoting the matrix $I+N^TP^{-1}M$ by $J_\ell^{(s)}$,
we thus need to solve the linear system
\begin{equation}\label{eq:linearsystem_s}
J_\ell^{(s)}\mathbf{x}=\mathbf{b}^{(s)}.
\end{equation}
To this end, we adopt a projection scheme similar to the one presented
in section~\ref{GalerkinSection}. Instead of~\eqref{eq:def_Krylov}, we
use the block Krylov subspace
\begin{equation}\label{eq:def_blockKrylov}
 \mathcal{K}_m^{\square}=\text{range}\{[B^{(s)},KB^{(s)},\ldots,K^{m-1}B^{(s)}]\}.
\end{equation}
If $V_m=[\mathcal{V}_1,\ldots,\mathcal{V}_m]\in\mathbb{R}^{n\times sm}$, $\mathcal{V}_i\in\mathbb{R}^{n\times s}$ for all $i=1,\ldots,m$, has orthonormal columns and is such that $\text{range}(V_m)=\mathcal{K}_m^{\square}$, then the block counterpart of the Arnoldi relation~\eqref{eq:polArnoldi} holds. In particular,
 \begin{equation}\label{eq:blockpolArnoldi}
 KV_m=V_mT_m+\mathcal{V}_{m+1}t_{m+1,m}(\mathbf{e}_m^T\otimes I_s),
 \end{equation}
where $T_m=V_m^TKV_m\in\mathbb{R}^{sm\times sm}$, and
$t_{m+1,m}\in\mathbb{R}^{s\times s}$; see, e.g.,
\cite{Gutknecht2006,Kirk2020} for further details on block Krylov
methods and efficient schemes for the computation of the basis $V_m$.

In view of~\eqref{eq:blockpolArnoldi} we can derive the block
counterparts of~\eqref{eq:shiftedArnoldi}. In particular, by using the
same arguments of section~\ref{GalerkinSection}, we have
\begin{equation}\label{eq:blockshiftedArnoldi}
 \begin{aligned}
&((1-\pi_i)I_{\bar n}+\tau\beta K)^{-1}V_m=V_m((1-\pi_i)I_m+\tau\beta T_m)^{-1}\\
&\qquad-((1-\pi_i)I_{\bar n}+\tau\beta K)^{-1}\mathcal V_{m+1}t_{m+1,m}(\mathbf{e}_m^T\otimes I_s)((1-\pi_i)I_m+\tau\beta T_m)^{-1}.
 \end{aligned}
\end{equation}
We propose to solve~\eqref{eq:linearsystem_s} by imposing a Galerkin
condition with respect to the space spanned by $I_s\otimes V_m$. In
particular, if $\mathbf{x}_m=(I_s\otimes V_m)\mathbf{y}_m$, we want
the residual vector
$\mathbf{r}_m=J_\ell^{(s)}\mathbf{x}_m-\mathbf{b}^{(s)}$ to be such
that $(I_s\otimes V_m^T)\mathbf{r}_m=0$.
\begin{proposition}
  If $B^{(s)}=V_1\beta$, $\beta\in\mathbb{R}^{s\times s}$,
  $H_i=((1-\pi_i)I_{\bar n}+\tau\beta K)^{-1}\mathcal V_{m+1}$, and
  $S_i=((1-\pi_i)I_m+\tau\beta T_m)^{-1}$, then the residual vector
  $\mathbf{r}_m=J_\ell^{(s)}\mathbf{x}_m-\mathbf{b}^{(s)}=J_\ell^{(s)}(I_s\otimes
  V_m)\mathbf{y}_m-\mathbf{b}^{(s)}$ can be written as
  \begin{align*}
    \mathbf{r}_m=&(I_s\otimes V_m)\left(\left(I_{sm}+
    \mathcal{T}_m\right)\mathbf{y}_m-(\mathbf{e}_1\otimes
    I_s)\beta\right)-(I-I_s\otimes V_mV_m^T)\mathcal{S}_m\mathbf{y}_m,
  \end{align*}
  where $\mathcal{T}_m,\mathcal{S}_m\in\mathbb{R}^{ms\times ms}$ are 
  $s\times s$ block matrices whose $(k,h)$ blocks
  $(\mathcal{T}_m)_{k,h}$, $(\mathcal{S}_m)_{k,h}$ are given by
  $$
    (\mathcal{T}_m)_{k,h}=\sum_{i=1}^\ell \gamma_i^{(k)}\theta_i^{(h)}
  S_i-V_m^T\sum_{i=1}^\ell \gamma_i^{(k)}\theta_i^{(h)}H_it_{m+1,m}(\mathbf{e}_m^T\otimes I_s)S_i,
  $$
  $$
  (\mathcal{S}_m)_{k,h}=\sum_{i=1}^\ell \gamma_i^{(k)}\theta_i^{(h)}H_it_{m+1,m}(\mathbf{e}_m^T\otimes I_s)S_i.
  $$
  Moreover, imposing the Galerkin condition
  $\mathbf{r}_m\perp\mathcal{K}_m^{\square}$ is equivalent to
  computing $\mathbf{y}_m$ as the solution of the linear system
  $$
  \left(I_{sm}+\mathcal{T}_m\right)\mathbf{y}_m
    =(\mathbf{e}_1\otimes I_s)\beta,
  $$
  and
  $$
    \|\mathbf{r}_m\|=\|(I-I_s\otimes V_mV_m^T)\mathcal{S}_m\mathbf{y}_m\|.
  $$
\end{proposition}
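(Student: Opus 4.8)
The plan is to mirror, block by block, the derivation carried out for the backward Euler case in section~\ref{GalerkinSection}, with the polynomial Arnoldi relation~\eqref{eq:polArnoldi} replaced by its block analogue~\eqref{eq:blockpolArnoldi} and the scalar shifted relation~\eqref{eq:shiftedArnoldi} replaced by~\eqref{eq:blockshiftedArnoldi}. First I would substitute $\mathbf{x}_m=(I_s\otimes V_m)\mathbf{y}_m$ into $\mathbf{r}_m=J_\ell^{(s)}\mathbf{x}_m-\mathbf{b}^{(s)}$ and expand $J_\ell^{(s)}=I+N^TP^{-1}M$ using the explicit $s\times s$ block form of $N^TP^{-1}M$ displayed above, whose $(k,h)$ block is $\sum_{i=1}^\ell\gamma_i^{(k)}\theta_i^{(h)}((1-\pi_i)I_{\bar n}+\tau\beta K)^{-1}$. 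In this way the action of $J_\ell^{(s)}$ on $(I_s\otimes V_m)\mathbf{y}_m$ reduces to understanding, for each shift index $i$, the product $((1-\pi_i)I_{\bar n}+\tau\beta K)^{-1}V_m$.

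For each $i$, relation~\eqref{eq:blockshiftedArnoldi} rewrites this product as $V_mS_i$ minus the correction $H_it_{m+1,m}(\mathbf{e}_m^T\otimes I_s)S_i$, with $H_i$ and $S_i$ as in the statement. Plugging this in and regrouping, the term $V_mS_i$ contributes, after left multiplication by the block coefficients $\gamma_i^{(k)}\theta_i^{(h)}$, a piece lying in $\text{range}(I_s\otimes V_m)$; the correction term is split through the identity $I=(I_s\otimes V_mV_m^T)+(I-I_s\otimes V_mV_m^T)$ into a part that, carrying the factor $V_m^TH_i$, is again in $\text{range}(I_s\otimes V_m)$ and is therefore absorbed into $\mathcal{T}_m$, and a part living in the orthogonal complement, which assembles to $(I-I_s\otimes V_mV_m^T)\mathcal{S}_m\mathbf{y}_m$. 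Collecting the two block matrices then yields exactly the claimed blocks $(\mathcal{T}_m)_{k,h}$ and $(\mathcal{S}_m)_{k,h}$.

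To handle the right-hand side I would use the block Arnoldi normalization $B^{(s)}=V_1\beta=\mathcal V_1\beta$, that is $B^{(s)}=V_m(\mathbf{e}_1\otimes I_s)\beta$, so that $\mathbf{b}^{(s)}=\text{vec}(B^{(s)})$ corresponds, in the reshaped unknown, to the block right-hand side $(\mathbf{e}_1\otimes I_s)\beta$. Finally, imposing the Galerkin condition $(I_s\otimes V_m^T)\mathbf{r}_m=\mathbf{0}$: since $V_m$ has orthonormal columns, $(I_s\otimes V_m^T)(I_s\otimes V_m)=I$ and $(I_s\otimes V_m^T)(I-I_s\otimes V_mV_m^T)=0$, so the orthogonal-complement part of $\mathbf{r}_m$ drops out and one is left with $(I_{sm}+\mathcal{T}_m)\mathbf{y}_m=(\mathbf{e}_1\otimes I_s)\beta$; the residual norm is then the norm of the surviving orthogonal part, $\|(I-I_s\otimes V_mV_m^T)\mathcal{S}_m\mathbf{y}_m\|$.

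The main obstacle I anticipate is purely organizational: keeping the block and Kronecker bookkeeping straight — in particular distinguishing the two natural incarnations of $\mathbf{y}_m$ (a long vector versus an $sm\times s$ matrix on which $I_{sm}+\mathcal{T}_m$ acts by left multiplication), tracking the placement of $(\mathbf{e}_m^T\otimes I_s)$ and of the scalar weights $\gamma_i^{(k)}\theta_i^{(h)}$ through each multiplication, and verifying that the $V_m^TH_i$ terms end up inside $\mathcal{T}_m$ while everything else that is not already in $\text{range}(I_s\otimes V_m)$ is collected into $\mathcal{S}_m$. No genuinely new idea beyond the scalar case of section~\ref{GalerkinSection} is needed; the delicate point is only to check that the splitting into an ``in-space'' and an ``orthogonal'' component is exhaustive and non-overlapping, so that the Galerkin projection produces precisely the stated reduced system and residual formula.
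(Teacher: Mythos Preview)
Your proposal is correct and follows exactly the approach the paper takes: the paper's proof is a single sentence stating that the result follows by applying the arguments of section~\ref{GalerkinSection} \emph{block-wise}, noting that $I_s\otimes V_m$ is block-diagonal with $s$ copies of $V_m$. Your write-up is in fact more detailed than what the paper provides, and the organizational concerns you flag (Kronecker bookkeeping, placement of $(\mathbf{e}_m^T\otimes I_s)$, the in-space/orthogonal splitting) are precisely the points one must track, but no additional idea is required.
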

\begin{proof}
  The results can be shown by applying the exact same arguments of
  section~\ref{GalerkinSection} \emph{block-wise} noticing that
  $$
    I_s\otimes V_m=\begin{bmatrix}
                   V_m & & \\
                   & \ddots & \\
                   & & V_m\\
                  \end{bmatrix}.
  $$
\end{proof}


\section{Numerical examples}\label{Numerical examples}

In this section we show several numerical examples to illustrate different aspects of our
new ParaDiag algorithm. We would like to mention that in the following we refrain from reporting any running times achieved by the routines we test. Indeed, all the experiments are carried out on a
simple laptop, so that reporting running times would be only informative
in a relative sense. Moreover, the application of $P$ is still
performed sequentially and no real parallel-in-time implementation is
adopted so far. On the other hand, we report the number of parallel-in-time loops that would be performed by the tested schemes as we believe this can be a fair measure of the computational cost.

Unless stated otherwise, the threshold used in
Algorithm~\ref{alg:galerkin} is $\epsilon=10^{-8}$. Moreover, we check
the residual norm in Algorithm~\ref{alg:galerkin} at each iteration,
i.e. we set ${\color{black}q}=1$.

\subsection{Heat Equation}

We consider the heat equation
\begin{equation}\label{eq:heat_eq.Ex1}
 \left\{
\begin{array}{rlll}
         u_t&=&\Delta u+f,& \quad \text{in }\Omega\times (0,1],\\
         u&=&0,&\quad\text{on }\partial\Omega,\\
         u(x,0)&=&u_0,&
        \end{array}\right.
\end{equation}
where $\Omega=(0,1)^2$. If the $2$-dimensional Laplace
operator is discretized using second order centered finite differences
with $n$ nodes in each direction and $h=1/(n+1)$, the stiffness matrix
$K\in\mathbb{R}^{\bar n\times \bar n}$, $\bar n=n^2$, can be written
as
$${\color{black}K=I_n\otimes T+T\otimes I_n},\quad T=\frac{1}{h^2}\begin{pmatrix}
 2 & -1 & & \\
 -1 & \ddots & \ddots &\\
 & \ddots & \ddots &-1 \\
 &&-1& 2\\                                                               \end{pmatrix}\in\mathbb{R}^{n\times n}.
$$
Since the eigenvalues of the SPD matrix $T$ are known in closed-form, by using the properties of the Kronecker product,
it is easy to show that
$$
  \lambda_{\min}(K)=\frac{8}{h^2}\sin\left(\frac{\pi}{2(n+1)}\right)^2.
$$
By defining $\tau=1/\ell$ for a given number of time steps $\ell\geq
1$, we can compute the upper bound~\eqref{eq:ThAlg}. In
Table~\ref{tab1} we show this upper bound for common values of $\bar
n$ and $\ell$.
\begin{table}
 \centering
 \caption{Upper bound {\color{black} of the coefficient matrix, $J_\ell$,}~\eqref{eq:ThAlg} for the
   $2$-dimensional heat equation~\eqref{eq:heat_eq.Ex1}. Different values of the number of spatial degrees of freedom
   $\bar n$ and time steps $\ell$ are used in the
   discretization.\label{tab1}}
\begin{tabular}{r r r }
 $\bar n$ & $\ell$  & $1+\frac{1}{\tau\lambda_{\min}(K)}$ \\
\hline
 
\multirow{3}{*}{65536}
  & 256 & 13.969 \\
 & 512 & 26.938 \\
  & 1024 & 52.877 \\
\hline
\multirow{3}{*}{262144}
  & 256 & 13.969 \\
 & 512 & 26.938 \\
  & 1024 & 52.876 \\
\hline
\multirow{3}{*}{1048576}
  & 256 & 13.969 \\
 & 512 & 26.938 \\
  & 1024 & 52.876 \\
  \end{tabular}
 \end{table}
From the results in Table~\ref{tab1} we first notice that while the
value of the upper bound~\eqref{eq:ThAlg} linearly grows with $\ell$,
an increment in the number of spatial degrees of freedom $\bar n$ does
not have a significant impact since $\lambda_{\min}(K)$ only
moderately increases with $n$. Moreover, $1+1/(\tau\lambda_{\min}(K))$
is $\mathcal{O}(10)$ for any value of $\bar n$ and $\ell$ we
tested. This implies that the matrix $J_\ell$ is always very
well-conditioned regardless of the source term $f$ and the initial
condition $u_0$ in~\eqref{eq:heat_eq.Ex1}.

We tested Algorithm~\ref{alg:paradiag} on an instance of the heat
equation~\eqref{eq:heat_eq.Ex1}. In particular, we consider
\cite[Example 6.1]{McDonald2018} where $f=0$ and
$u_0=xy(x-1)(1-y)$.
{\color{black}
 For all the values of $\bar n$ and $\ell$ listed in Table~\ref{tab1}, a single iteration of Algorithm~\ref{alg:galerkin} already
results in a relative residual norm smaller than $\epsilon = 10^{-8}$.} Indeed, a more careful inspection of the
eigenvalue distribution of $J_\ell$ shows that the latter is a very small
perturbation of the identity. In particular,
$$
  \lambda_{\min}(J_\ell)=1+\sum_{i=1}^\ell\frac{\gamma_i}{1-\pi_i+\tau\lambda_{\max}(K)},\quad \lambda_{\max}(J_\ell)=1+\sum_{i=1}^\ell\frac{\gamma_i}{1-\pi_i+\tau\lambda_{\min}(K)},
$$
and for all the values of $\bar n$ and $\ell$ listed in
Table~\ref{tab1} we have
$[\lambda_{\min}(J_\ell),\lambda_{\max}(J_\ell)]\subset [1,1.004]$.

\subsection{Advection-Diffusion Problem}
We now consider the time-dependent advection-diffusion equation
\begin{equation}\label{Ex.3_eq}
 \left\{\begin{array}{rlll}
         u_t-\nu\Delta u + \mathbf{w}\cdot\nabla u&=&0,&  \text{in }\Omega\times (0,1],\;\Omega\colon=(0,1)^2,\\
         u&=&g(x,y),&\text{on }\partial\Omega,\\
         u_0=u(x,y,0)&=&0&\text{otherwise,}
        \end{array}\right.
\end{equation}
where $\nu>0$, $\mathbf{w}=(2y(1-x^2),-2x(1-y^2))$ and
$g(1,y)=g(x,0)=g(x,1)=0$ while $g(0,y)=1$;
see, e.g.,~\cite{McDonald2018}.
To obtain the stiffness matrix
$K\in\mathbb{R}^{\bar n\times \bar n}$, we used again centered finite
differences, and for the time integration backward Euler. The initial vector $\mathbf{u}_0$ is set to be zero everywhere except the boundaries, where it satisfies the boundary conditions.

We start by exploring numerically the $\alpha$-acceleration technique
presented in section~\ref{alpha_acceleration}. We show in
Figure~\ref{Fig1Ex2}
\begin{figure}[t]
  \centering
  \includegraphics[scale=0.8]{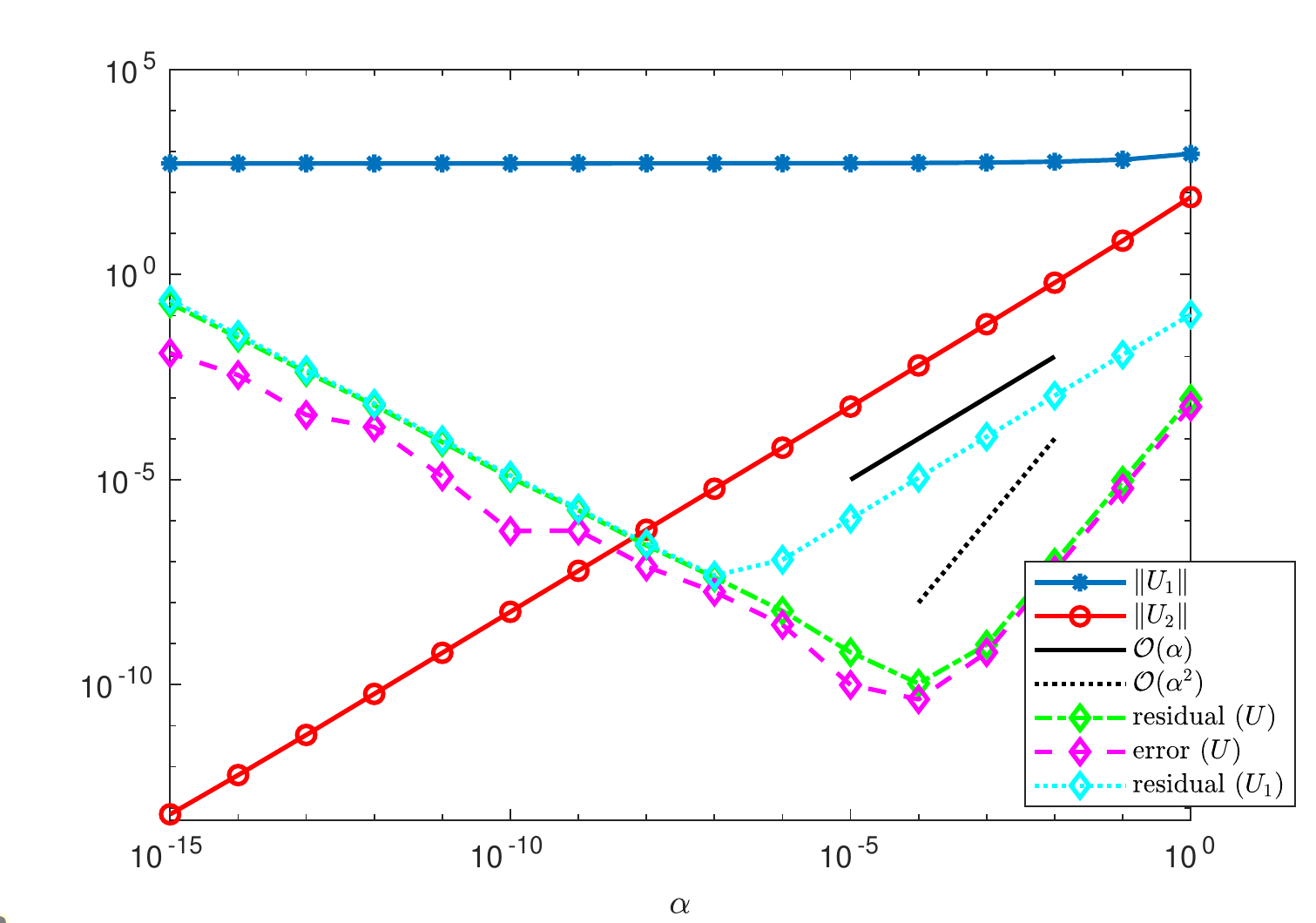}
  \caption{Advection-diffusion equation: $\|U_1\|_F$ (solid line with
    stars), $\|U_2\|_F$ (solid line with circles), relative
    residual norm achieved by $U$ (dashed-dotted line) and
    $U_1$ (dotted line), and the error $\|\widehat
      U-U\|_F/\|\widehat{U}\|_F$ (dashed line) for
    different values of $\alpha$, $\bar n=65\,536$, $\nu=2^{-5}$, and
    $\ell=64$. $\widehat{U}$ denotes the solution obtained by
    running GMRES on the all-at-once system with a small residual
    tolerance ($10^{-13}$).  }\label{Fig1Ex2}
\end{figure}
the results obtained by running a single
iteration of the FOM-like method, namely $\text{maxit}=1$ in
Algorithm~\ref{alg:galerkin}, for different values of $\alpha$. We set
$\bar n=65\,536$, $\nu=2^{-5}$, and use $\ell=64$ time steps, and
recall that we write the computed solution $U$ as
$U=U_1+U_2$ where $U_1$ and
$U_2$ are such that
$$
\begin{array}{l}
\text{vec}(U_1)=(D_\alpha^{-1}{F}^{-1}\otimes I)\text{vec}(L),\quad \text{vec}(L)= P^{-1}\text{vec}([\mathbf{u_0}+\tau\mathbf{f}_1,\ldots,\tau\mathbf{f}_\ell]D_\alpha{F}^T),\\
\\
\text{vec}(U_2)=-\alpha^{1/\ell} (D_\alpha^{-1}{F}^{-1}\otimes I)P^{-1}M(I+\alpha^{1/\ell} N^TP^{-1}M)^{-1}N^T\text{vec}(L).
\end{array}
$$
In Figure~\ref{Fig1Ex2} we show $\|U_1\|_F$ (solid line with
stars), $\|U_2\|_F$ (solid line with circles), the relative
residual norm achieved by the whole $U$ (dashed-dotted line)
and the one attained by using only $U_1$ (dotted line)
for different values of $\alpha$. We also
computed the \emph{exact} algebraic solution $\widehat{U}$ by
running GMRES on the all-at-once system with the very small tolerance
$10^{-13}$ on the relative residual norm. In Figure~\ref{Fig1Ex2} we
thus plot also the trend of $\|\widehat
  U-U\|_F/\|\widehat{U}\|_F$ (dashed line).

The first thing to notice from Figure~\ref{Fig1Ex2} is that a change
in $\alpha$ does not really have an impact on $\|U_1\|_F$;
this value remains (almost) constant for all the $\alpha$'s we tested.  On the other
hand, the trend of $\|U_2\|_F$ closely follows the values of
$\alpha$ showing how the contribution of $U_2$
strongly depends on the selected $\alpha$. If a $\mathcal{O}(10^{-7})$
relative residual norm is good enough\footnote{Notice that having this
  kind of accuracy in the algebraic problem is often exceeding the discretization error by orders of magnitude.}, $U_2$
can be completely neglected if $\alpha=\mathcal{O}(10^{-7})$ in this
example. This means that $U=U_1$ can be computed by
performing a single parallel-in-time loop. {\color{black} On the other hand, $U_2$ has an important role in the accuracy of the overall solution. Indeed, it is interesting to note that the error and the residual norm achieved by $U=U_1+U_2$
scale like $\alpha^2$ whereas the residual norm attained by $U_1$ depends only linearly on $\alpha$.}

Note also that $\alpha$ plays a role in the inner linear system we
need to solve to compute $U_2$. Figure~\ref{Fig1Ex2} shows
that the smaller $\alpha$, the more accurate a solution we get by
performing a single iteration of the FOM-like method. This is due to
the fact that the coefficient matrix in~\eqref{eq:innersystem_alpha}
becomes a small perturbation of the identity for small values of
$\alpha$.

As already mentioned, however, using too small values of $\alpha$
leads to a remarkable increase in the condition number of the
transformation matrix ${F}D_\alpha$. This is clearly visible in
Figure~\ref{Fig1Ex2}: for $\alpha\leq 10^{-4}$ the error starts
increasing, since $\kappa({F}D_\alpha)$ is becoming the dominant
factor polluting the quality of the computed solution
$U$.

\subsection{Comparison with other ParaDiag Methods}
  
We now compare our new ParaDiag scheme, namely Algorithm~\ref{alg:alpha_paradiag}, with
different state-of-the-art ParaDiag solvers. As already mentioned, in what follows we
report the number of parallel-in-time loops {\color{black}(\#PinT)} needed by the routines we test as a measure to
assess their computational cost, instead of
running time, since the latter strongly depends on the precise
implementations and computing infrastructures used. Recording fair
comparisons in terms of running times would not be straightforward.

The first ParaDiag technique we compare to is the one proposed
in~\cite{McDonald2018}, where preconditioned GMRES is used to
iteratively solve the $\bar n\ell\times \bar n\ell$ linear system
corresponding to the Kronecker form of~\eqref{eq:discrete_prob}. {\color{black}
The preconditioning operator we \BL{use} within GMRES with
right preconditioning\footnote{To have
  fair comparisons, we use a matrix oriented GMRES implementation, so
  that the Kronecker form of~\eqref{eq:discrete_prob} is never
  explicitly computed. See, e.g., \cite{PalittaK21}.
} \BL{is}
$$\mathcal{P}:x\mapsto (F\otimes I_{\bar n})(I_\ell\otimes (I_{\bar n}+\tau K)+\Pi_1\otimes I_{\bar n})(F^{-1}\otimes I_{\bar n})x.
$$
Therefore, at each iteration we need to apply $\mathcal{P}^{-1}$ to the current basis vector, namely} we need to perform a
parallel-in-time loop every time we apply the preconditioner. This
means that, in total, this routine performs $p+1$ parallel-in-time loops if $p$
denotes the number of GMRES iterations needed to converge. The
threshold on the relative residual norm we used in GMRES is
$\epsilon=10^{-8}$. {\color{black} Notice that the use of right preconditioning allows us to have access to the actual, unpreconditioned residual norm computed by GMRES. The latter quantity is thus comparable with the residual norm achieved by Algorithm~\ref{alg:alpha_paradiag}.}

The second ParaDiag technique we compare to is the new interpolation
scheme presented in~\cite[Section 3]{Kressner2022} and denoted by {\tt
  Ev-Int} in what follows. In this method, given two parameters
$\rho$ and $r$, one needs to perform $r$ parallel-in-time loops
involving different coefficient matrices. The quality of the computed
solution depends on $\rho$ and $r$. The authors in~\cite{Kressner2022}
do not comment much on the selection of $\rho$ and $r$. They
suggest to use $\rho=5\cdot10^{-4}$ and $r=2$, values
adopted in most of the experiments shown in~\cite{Kressner2022}.

In Table~\ref{tab2}
\begin{table}
 \centering
 \addtolength{\tabcolsep}{-2.8pt}
  \begin{tabular}{r r r| rr|rr|rr}
& & & \multicolumn{2}{c|}{Algorithm~\ref{alg:alpha_paradiag} ($\alpha=10^{-4}$)}& \multicolumn{2}{c|}{GMRES} & \multicolumn{2}{c}{{\tt Ev-Int}} \\
  $\bar n$  & $\ell$ &$\nu$ & {\color{black}\#PinT} & Rel. Res. & {\color{black}\#PinT} & Rel. Res. &{\color{black}\#PinT} & Rel. Res. \\
\hline
 \multirow{9}{*}{$16\,384$} & \multirow{3}{*}{32} & $10^{-1}$ & 3 & 8.41e-11 & 5 & 8.02e-13 & {\color{black}2}&7.26e-11 \\
 
 && $10^{-2}$ & 3 & 6.98e-12 & 5 & 7.28e-12 & {\color{black}2}&1.24e-11 \\

 && $10^{-3}$ & 3 & 2.77e-12 & 5 & 2.13e-10 & {\color{black}2}&7.22e-11 \\
 
  &\multirow{3}{*}{64}& $10^{-1}$ & 3 & 1.74e-11 & 5 & 1.12e-13 & {\color{black}2}&3.72e-11 \\

  && $10^{-2}$ & 3 & 1.42e-12 & 5 & 1.31e-12 & {\color{black}2}&4.19e-12 \\

  && $10^{-3}$ & 3 & 7.41e-13 & 5 & 9.42e-12 & {\color{black}2}&1.99e-11 \\

 & \multirow{3}{*}{128} & $10^{-1}$ & 3 & 1.20e-11 & 5 & 4.51e-14 & {\color{black}2}&2.30e-11 \\
    
  && $10^{-2}$ & 3 & 1.01e-12 & 5 & 4.24e-13 & {\color{black}2}&2.23e-12 \\
 
  && $10^{-3}$ & 3 & 3.99e-13 & 5 & 1.88e-12 & {\color{black}2}&8.11e-12 \\

    \hline
    \multirow{9}{*}{$65\,536$} & \multirow{3}{*}{32}& $10^{-1}$ & 3 & 3.42e-10 & 5 & 6.67e-13 & {\color{black}2}&8.58e-11 \\

    && $10^{-2}$ & 3 & 2.72e-11& 5 & 3.91e-12 & {\color{black}2}&1.09e-11 \\
    
    && $10^{-3}$ & 3 & 3.61e-12 & 5 & 3.37e-11 & {\color{black}2}&4.49e-11 \\

 & \multirow{3}{*}{64} & $10^{-1}$ & 3 & 7.26e-11 & 5 & 1.31e-13 & {\color{black}2}&3.62e-11 \\

 && $10^{-2}$ & 3 & 5.21e-12 & 5 & 6.73e-13 & {\color{black}2}&3.33e-12 \\

 && $10^{-3}$ & 3 & 7.29e-13 & 5 & 8.71e-13 & {\color{black}2}&1.00e-11 \\

  & \multirow{3}{*}{128} & $10^{-1}$ & 3 & 4.84e-11 & 5 & 1.26e-13 &{\color{black}2}& 3.07e-11 \\

  & & $10^{-2}$ & 3 & 3.71e-12 & 5 & 2.11e-13 &{\color{black}2}& 2.28e-12 \\

  & & $10^{-3}$ & 3 & 5.40e-13 & 5 & 1.50e-13 & {\color{black}2}&3.55e-12 \\

  \end{tabular}
 \caption{Advection-Diffusion equation: results for different values of $\bar
   n$, $\ell$, and $\nu$.\label{tab2}}
\end{table}
we show the results for different values of $\bar n$, $\ell$, and the
viscosity parameter $\nu$. We use Algorithm~\ref{alg:alpha_paradiag}
with $\alpha=10^{-4}$, and recall that this algorithm performs $m+2$
parallel-in-time loops where $m$ is the number of iterations needed
by Algorithm~\ref{alg:galerkin} to converge. From the results in
Table~\ref{tab2} we see that the number of parallel-in-time loops performed
by both Algorithm~\ref{alg:alpha_paradiag} and GMRES are very robust
with respect to $\bar n$, $\ell$, and $\nu$. We also see that the
accuracy attained by our solver improves by decreasing $\nu$, for
fixed $\bar n$ and $\ell$, whereas GMRES shows the opposite trend, in
general. {\tt Ev-Int} has a less regular behavior in this regard.

Notice however that, even though Algorithm~\ref{alg:alpha_paradiag} is
often more accurate than {\tt Ev-Int}, the latter algorithm only performs two
parallel-in-time loops, in contrast to the three loops performed by our new
ParaDiag algorithm. By setting $r=3$, similar results in terms of
accuracy can be obtained for {\tt Ev-Int} as well.

To make a fair
comparison between our procedure and {\tt Ev-Int}, in
Table~\ref{tab3} we show
\begin{table}
 \setlength{\tabcolsep}{.4em}
 \centering
\begin{tabular}{r r r| cc|cc}
& & & \multicolumn{2}{c|}{{\color{black}\#PinT$=2$}}& \multicolumn{2}{c}{{\color{black}\#PinT$=1$}}  \\
  $\bar n$  & $\ell$ &$\nu$ & Alg.~\ref{alg:alpha_paradiag} ($\alpha=10^{-4}$) & {\tt Ev-Int} & Alg.~\ref{alg:alpha_paradiag} ($\alpha=10^{-6}$) & {\tt Ev-Int} \\
\hline
 \multirow{9}{*}{$16\,384$} & \multirow{3}{*}{32} & $10^{-1}$ & 8.44e-11 & 7.26e-11 & 1.87e-8 & 8.95e-6 \\

 && $10^{-2}$ & 6.97e-12 & 1.24e-11 & 3.97e-8 & 1.98e-5  \\

 && $10^{-3}$ & 3.14e-12 & 7.22e-11 & 5.88e-8 & 2.94e-5  \\

  &\multirow{3}{*}{64}& $10^{-1}$ & 1.76e-11 & 3.72e-11 & 1.09e-8 & 5.45e-6 \\

  && $10^{-2}$ & 1.43e-12 & 4.19e-12 & 2.45e-8 & 1.22e-5  \\

  && $10^{-3}$ & 8.43e-13 & 1.99e-11 & 3.99e-8 & 1.99e-5  \\

 & \multirow{3}{*}{128} & $10^{-1}$ & 1.20e-11 & 2.30e-11 & 8.23e-9 & 3.55e-6  \\

  && $10^{-2}$ & 1.01e-12 & 2.23e-12 & 1.59e-8 & 7.96e-6  \\

  && $10^{-3}$ & 4.30e-13 & 8.11e-12 & 2.79e-8 & 1.39e-5  \\

    \hline
    \multirow{9}{*}{$65\,536$} & \multirow{3}{*}{32}& $10^{-1}$ & 3.43e-10 & 8.58e-11 & 2.84e-8 & 8.66e-6 \\

    && $10^{-2}$ & 2.71e-11 & 1.09e-11& 3.84e-8 & 1.92e-5  \\

    && $10^{-3}$ & 3.73e-12 & 4.49e-11 & 5.64e-8 & 2.82e-5 \\

 & \multirow{3}{*}{64} & $10^{-1}$ & 7.27e-11 & 3.62e-11 & 1.17e-8 & 5.26e-6 \\

 && $10^{-2}$ & 5.21e-12 & 3.33e-12 & 2.35e-8 & 1.17e-5  \\

 && $10^{-3}$ & 7.57e-13 & 1.00e-11 & 3.81e-8 & 1.90e-5  \\

  & \multirow{3}{*}{128} & $10^{-1}$ & 4.83e-11 & 3.07e-11 & 1.78e-8 & 3.43e-6  \\

  & & $10^{-2}$ & 3.71e-12 & 2.28e-12 & 1.52e-8 & 7.59e-6  \\

  & & $10^{-3}$ & 5.46e-13 & 3.55e-12 & 2.66e-8 & 1.33e-5  \\

\end{tabular}
 \caption{Advection-Diffusion equation: results for different values of $\bar
   n$, $\ell$, and $\nu$.\label{tab3}}
\end{table}
the relative residual norms obtained by fixing the main computational
cost of the two algorithms, namely we perform the same number of
parallel-in-time loops in both schemes. For {\tt Ev-Int} we use $r=2$
(same results as in Table~\ref{tab2}) and $r=1$. For
Algorithm~\ref{alg:alpha_paradiag} ($\alpha=10^{-4}$) we perform only
two parallel-in-time loops by approximating the matrix
in~\eqref{eq:innersystem_alpha} by the identity, namely the FOM-like
method is not performed and we set $\mathbf{x}_m=\mathbf{b}$ in
line~\ref{alg:linegalerkin} of Algorithm~\ref{alg:alpha_paradiag}. The
single parallel-in-time loop scenario is addressed by reporting the
accuracy attained by $U_1$ in
Algorithm~\ref{alg:alpha_paradiag} for $\alpha=10^{-6}$.

As shown in Table~\ref{tab3}, our new ParaDiag algorithm is in general
at least as accurate as {\tt Ev-Int} in case of two parallel-in-time
loops. On the other hand, whenever a single parallel-in-time loop is
performed, our new Algorithm~\ref{alg:alpha_paradiag} ($\alpha=10^{-6}$) is
often three orders of magnitude more accurate than {\tt
  Ev-Int}. Therefore, in general, our new ParaDiag algorithm achieves
better results in terms of accuracy than {\tt Ev-Int} whenever a cap
on the computational cost of the adopted solver is imposed. {\color{black} We would like to mention, however, that a careful tuning of the {\tt Ev-Int} parameters may improve the performance of the solver.}


\section{Conclusions}\label{Conclusions}

We presented a new ParaDiag algorithm which fully exploits the circulant-plus-low-rank structure of the discrete operator stemming from {\color{black}the discretization of evolution problems~\eqref{eq:diff_prob} by} BDFs, one of the most commonly used family of implicit time integrators.
A clever use of the matrix-oriented
Sherman-Morrison-Woodbury formula along with the design of an ad-hoc projection scheme make our new strategy very successful.

We studied our algorithm for parabolic problems, and
also introduced a variant using $\alpha-$acceleration, based on
$\alpha-$circulant matrices. A comparison with recent ParaDiag
techniques from the literature shows that our new ParaDiag algorithm
is competitive and capable of delivering superior accuracy for
comparable cost.

Our methodology can be easily generalized to other discretization schemes as long as the discrete time operator can be written as a circulant matrix plus a low-rank correction. {\color{black} For instance, in~\cite[Section 4]{Kressner2022} it is shown that the all-at-once discretization of~\eqref{eq:diff_prob} by Runge-Kutta methods leads to discrete problems similar to~\eqref{eq:discrete_prob}, with a comparable $\Sigma_1$. Therefore, our \BL{ParaDiag scheme can also be used to parallelize these Runge-Kutta methods in time.}}

\section*{Acknowledgments}

This work was supported by the Swiss National Science Foundation, and
part has been carried out at CIRM in Marseille, France, in the context
of the Morlet chair of the first author. The second author is member
of the Italian INdAM Research group GNCS. His work was partially supported by the research project ``Tecniche avanzate per problemi evolutivi: discretizzazione, algebra lineare numerica, ottimizzazione'' (INdAM
- GNCS Project CUP\_E55F22000270001).

{\color{black} We would like to thank the anonymous reviewers for their valuable comments and remarks.}

{\color{black}
\section*{Appendix}

 \BL{We show now} the proof of Theorem~\ref{th:alg_galerkin}.

\begin{proof}
If $K$ is symmetric, then by its definition also $J_\ell$ is
symmetric.  The only complex values involved in the definition of
$J_\ell$ are the $\pi_i$'s and $\gamma_i$'s and they appear as either
diagonal elements (the former ones) or scalar multipliers (the latter
ones).  The eigenvalues $\pi_i$ of the circulant matrix $C_1$ are of
the form $\pi_i=\omega^{-(\ell-1)(i-1)}$ for all $i=1,\ldots,\ell$,
where $\omega=e^{-\frac{2\pi \imath}{\ell}}\in\mathbb{C}$. Therefore,
$|\pi_i|\leq 1$ for all $i=1,\ldots,\ell$ and they come in complex
conjugate pairs. Moreover,
$$
  \begin{bmatrix}
    \gamma_1\\
    \vdots\\
    \gamma_\ell\\
   \end{bmatrix}={F}^{-T}\mathbf{e}_\ell=\frac{1}{\ell} \mathcal{\bar F}\mathbf{e}_\ell=\frac{1}{\ell}\begin{bmatrix}
    1\\
    \omega^{-(\ell-1)}\\
    \omega^{-2(\ell-1)}\\
    \vdots\\
    \omega^{-(\ell-1)^2}\\
    \end{bmatrix}=\frac{1}{\ell}\begin{bmatrix}
    1\\
    \pi_1\\
    \pi_2\\
    \vdots\\
    \pi_\ell\\
    \end{bmatrix},
$$
so that also the $\gamma_i$'s come in complex conjugate pairs, and
$|\gamma_i|\leq 1/\ell\leq 1$ for any $i=1,\ldots,\ell$, $\ell\geq 1$.

Since $J_\ell$ is symmetric, $|\pi_i|\leq 1$ for any $i$, and the $\pi_i$'s and $\gamma_i$'s all
come in complex conjugate pairs, $J_\ell$ is Hermitian.

We now show that it is also positive definite. For any
$z\in\mathbb{C}^\ell$ satisfying $\|z\|=1$, we have $
\text{Re}\left(z^*((1-\pi_i)I_{\bar n}+\tau K)z\right)>0 $ since $K$
is SPD, $\tau>0$ and $|\pi_i|\leq1$ for any $i=1,\ldots,\ell$. This
means that also $((1-\pi_i)I_{\bar n}+\tau K)^{-1}$ is positive
definite for any $i$. Furthermore, recalling that $\gamma_i=\pi_i/\ell$, we have
\begin{align}\label{eq:realpart_pi}
 \min_{z\in\mathbb{C}^\ell,\|z\|=1}\text{Re}(z^*J_\ell z)=&1+\sum_{i=1}^\ell\text{Re}(\gamma_i)\min_{z\in\mathbb{C}^\ell,\|z\|=1}\text{Re}(z^*((1-\pi_i)I+\tau K)^{-1}z)\notag\\
=&1+\frac{1}{\ell}\sum_{i=1}^\ell\frac{\text{Re}(\pi_i)}{1-\text{Re}(\pi_i)+\tau\lambda_{\max}(K)}.
\end{align}
By construction we know that $\sum_{i=1}^\ell\text{Re}(\pi_i)=0$. Moreover, if we denote
by $\mathfrak{J}$, $\mathfrak{K}$ the index sets such that $\text{Re}(\pi_i)\geq0$ and $\text{Re}(\pi_i)<0$, respectively, then, for an even $\ell$, $\mathfrak{J}$ and $\mathfrak{K}$ have the same cardinality. In particular, $\#\mathfrak{J}=\#\mathfrak{K}=\ell/2$ and we have
$$0=\sum_{i=1}^\ell\text{Re}(\pi_i)=\sum_{j\in\mathfrak{J}}\text{Re}(\pi_j)+\sum_{k\in\mathfrak{K}}\text{Re}(\pi_k)=\sum_{j\in\mathfrak{J}}(\text{Re}(\pi_j)-\text{Re}(\pi_j)),
$$
namely for a given $j\in\mathfrak{J}$ there exists an index $k\in\mathfrak{K}$ such that $\text{Re}(\pi_j)=-\text{Re}(\pi_k)$ if $\ell$ is even.

On the other hand, if $\ell$ is odd, we have $\#\mathfrak{J}=(\ell-1)/2+1$, $\#\mathfrak{K}=(\ell-1)/2$ and we can write
$$0=\sum_{i=1}^\ell\text{Re}(\pi_i)=\sum_{j\in\mathfrak{J}}\text{Re}(\pi_j)+\sum_{k\in\mathfrak{K}}\text{Re}(\pi_k)=
1+\sum_{j\in\mathfrak{J}, \pi_j\neq 1}\text{Re}(\pi_j)+\sum_{k\in\mathfrak{K}}\text{Re}(\pi_k),
$$
which means that
$$0<\sum_{j\in\mathfrak{J}, \pi_j\neq 1}\text{Re}(\pi_j)=-1-\sum_{k\in\mathfrak{K}}\text{Re}(\pi_k).
$$

However, in~\eqref{eq:realpart_pi} we have a
weighted sum of the real parts of the eigenvalues $\pi_i$. In particular, we can write

$$
\frac{1}{\ell}\sum_{i=1}^\ell\frac{\text{Re}(\pi_i)}{1-\text{Re}(\pi_i)+\tau\lambda_{\max}(K)}=\sum_{i=1}^\ell\text{Re}(\pi_i)w(\pi_i),$$
where
$$ w(\pi_i):=\frac{1}{\ell(1-\text{Re}(\pi_i)+\tau\lambda_{\max}(K))}> 0,\quad \text{for any } i=1,\ldots,\ell.
$$
We now focus on the scalars $w(\pi_i)$  and we define
$$w(\pi_i):=\left\{\begin{array}{l}
\check{w}(\pi_i), \quad\text{if }\;i\in\mathfrak{K}\BL{,}\\
\hat{w}(\pi_i), \quad\text{if }\;i\in\mathfrak{J}.
\end{array}\right.
$$
A direct computation shows that $\hat{w}(\pi_j)\geq \check{w}(\pi_k)$ for any $j\in\mathfrak{J},k\in\mathfrak{K}$, since
\begin{equation}\label{eq:minmax}
\min_{j\in\mathfrak{J}}\hat{w}(\pi_j)\geq
\frac{1}{\ell\tau\lambda_{\max}(K)}\geq\max_{k\in\mathfrak{K}}\check{w}(\pi_k).
\end{equation}
For an even $\ell$, it holds that
\begin{align*}
\sum_{i=1}^\ell\text{Re}(\pi_i)w(\pi_i)=&\sum_{j\in\mathfrak{J}}\text{Re}(\pi_j)\hat{w}(\pi_j)+\sum_{k\in\mathfrak{K}}\text{Re}(\pi_k)\check{w}(\pi_k)\\
=&\sum_{j\in\mathfrak{J}}\text{Re}(\pi_j)(\hat{w}(\pi_j)-\check{w}(\pi_{k_j}))
\geq0,
\end{align*}
where the index $k_j\in\mathfrak{K}$ is such that
$\text{Re}(\pi_{k_j})=-\text{Re}(\pi_{j})$ for a given $j\in\mathfrak{J}$.

On the other hand, if $\ell$ is odd and $j^*\in\mathfrak{J}$ is such that $\pi_{j^*}=1$, we can write
\begin{align*}
 \sum_{i=1}^\ell\text{Re}(\pi_i)w(\pi_i)=&\,
\hat{w}(\pi_{j^*})+\sum_{j\in\mathfrak{J}\setminus\{j^*\}}\text{Re}(\pi_j)\hat{w}(\pi_j)+\sum_{k\in\mathfrak{K}}\text{Re}(\pi_k)\check{w}(\pi_k)\\
\geq &\,\hat{w}(\pi_{j^*})+\frac{1}{\ell\tau\lambda_{\max}(K)}\sum_{j\in\mathfrak{J}\setminus\{j^*\}}\text{Re}(\pi_j)+\sum_{k\in\mathfrak{K}}\text{Re}(\pi_k)\check{w}(\pi_k)\\
=&\,\hat{w}(\pi_{j^*})-\frac{1}{\ell\tau\lambda_{\max}(K)}-\frac{1}{\ell\tau\lambda_{\max}(K)}\sum_{k\in\mathfrak{K}}\text{Re}(\pi_k)+\sum_{k\in\mathfrak{K}}\text{Re}(\pi_k)\check{w}(\pi_k)\\
=&\,\hat{w}(\pi_{j^*})-\frac{1}{\ell\tau\lambda_{\max}(K)}+\sum_{k\in\mathfrak{K}}\text{Re}(\pi_k)\left(\check{w}(\pi_k)-\frac{1}{\ell\tau\lambda_{\max}(K)}\right)\geq 0.
%
\end{align*}
The nonegativity of the quantity above follows from~\eqref{eq:minmax}. Indeed, $\hat{w}(\pi_{j^*})\geq\frac{1}{\ell\tau\lambda_{\max}(K)}$ whereas $\check{w}(\pi_k)-\frac{1}{\ell\tau\lambda_{\max}(K)}\leq 0$ for any $k\in\mathfrak{K}$. On the other hand, $\text{Re}(\pi_k)<0$ for any $k\in\mathfrak{K}$ so that
$\text{Re}(\pi_k)\left(\check{w}(\pi_k)-\frac{1}{\ell\tau\lambda_{\max}(K)}\right)\geq 0$ for any $k\in\mathfrak{K}$.

Therefore, for any $\ell$, we have
$$1+\frac{1}{\ell}\sum_{i=1}^\ell\frac{\text{Re}(\pi_i)}{1-\text{Re}(\pi_i)+\tau\lambda_{\max}(K)}>0,$$
and this shows the positive definiteness of $J_\ell$.

To conclude, we show the upper bound~\eqref{eq:ThAlg}.
For any $\ell$, we have
\begin{align*}
 \kappa(J_\ell)=&\,\frac{\max_i|\lambda_i(J_\ell)|}{\min_i|\lambda_i(J_\ell)|}=
 \frac{\max_{z\in\mathbb{C}^\ell,\|z\|=1}|z^*J_\ell z|}{\min_{z\in\mathbb{C}^\ell,\|z\|=1}|z^*J_\ell z|}\\
 =&\,\frac{1+\max_{z\in\mathbb{C}^\ell,\|z\|=1}|\sum_{i=1}^\ell \gamma_i
z^*((1-\pi_i)I_{\bar n}+\tau K)^{-1}z|}{1+\min_{z\in\mathbb{C}^\ell,\|z\|=1}|\sum_{i=1}^\ell \gamma_i
z^*((1-\pi_i)I_{\bar n}+\tau K)^{-1}z|}.
\end{align*}
Recalling that $|\gamma_i|\leq 1/\ell$ and $0\leq|1-\pi_i|\leq2$ for any $i$ we get
\begin{align*}
\kappa(J_\ell)
 \leq&\,\frac{1+\max_i\max_{z\in\mathbb{C}^\ell,\|z\|=1}|
z^*((1-\pi_i)I_{\bar n}+\tau K)^{-1}z|}{1+\ell\min_i\min_{z\in\mathbb{C}^\ell,\|z\|=1}|z^*((1-\pi_i)I_{\bar n}+\tau K)^{-1}z|}\\
= &\,\frac{1+\max_i\frac{1}{\min_{z\in\mathbb{C}^\ell,\|z\|=1}|
z^*((1-\pi_i)I_{\bar n}+\tau K)z|}}{1+\ell\min_i\frac{1}{\max_{z\in\mathbb{C}^\ell,\|z\|=1}|z^*((1-\pi_i)I_{\bar n}+\tau K)z|}} \\
\leq&\,\frac{1+\frac{1}{\tau \lambda_{\min}(K)}}{1+\frac{\ell}{2+\tau\lambda_{\max}(K)}}=\frac{\tau\lambda_{\min}(K)+1}{2+\tau\lambda_{\max}(K)+\ell}\cdot\frac{2+\tau\lambda_{\max}(K)}{\tau\lambda_{\min}(K)}\\
=&\,\frac{\tau\lambda_{\min}(K)}{\tau\lambda_{\max}(K)}\frac{1+\frac{1}{\tau\lambda_{\min}(K)}}{1+\frac{2+\ell}{\tau\lambda_{\max}(K)}}\cdot\frac{\tau\lambda_{\max}(K)}{\tau\lambda_{\min}(K)}\left(1+\frac{2}{\tau\lambda_{\max}(K)}\right)\\
\leq&\, 1+\frac{1}{\tau\lambda_{\min}(K)
}.
\\
\end{align*}
\end{proof}
}
\bibliography{paradiag}

\end{document}